\journal{Stochastic Processes and their Applications}
\newcommand{\cov}{\operatorname{Cov}}
\newcommand{\var}{\operatorname{Var}}
\newcommand{\card}{\mathop{\rm Card}}
\newtheorem{theo}{Theorem}[section]
\newtheorem{pr}{Proposition}[section]
\newtheorem{lem}{Lemma}[section]
\newtheorem{rmk}{Remark}[section]
\begin{document}

\begin{frontmatter}



\title{Random pinning model with finite range correlations: disorder relevant regime}


\author{Julien Poisat}

\address{Institut Camille Jordan,
Universit\'e Claude Bernard Lyon 1,
43 boulevard du 11 novembre 1918
69622 Villeurbanne cedex
France, poisat@math.univ-lyon1.fr}

\begin{abstract}
The purpose of this paper is to show how one can extend some results on disorder relevance obtained for the random pinning model with i.i.d disorder to the model with finite range correlated disorder. In a previous work, the annealed critical curve of the latter model was computed, and equality of quenched and annealed critical points, as well as exponents, was proved under some conditions on the return exponent of the interarrival times. Here we complete this work by looking at the disorder relevant regime, where annealed and quenched critical points differ. All these results show that the Harris criterion, which was proved to be correct in the i.i.d case, remains valid in our setup. We strongly use Markov renewal constructions that were introduced in the solving of the annealed model.
\end{abstract}

\begin{keyword}
Pinning \sep finite range correlations \sep phase transition \sep critical curve \sep Harris criterion \sep disorder relevance \sep fractional moments \sep Perron-Frobenius theory \sep Markov renewal theory
\end{keyword}
\end{frontmatter}

\section{Introduction}

Let $\tau = (\tau_n)_{n\geq0}$ be a recurrent renewal process starting at $\tau_0 = 0$ with interarrival distribution
 \begin{equation}\label{defK}K(n) = P(\tau_1 = n) = L(n)n^{-(1+\alpha)}\end{equation}
 with $L(\cdot)$ a slowly varying function. By recurrent we mean that $\sum_{n\geq1} K(n)=1$. The interarrival times, or stretches, are the random variables $T_k = \tau_k -\tau_{k-1}$, $k\geq 1$. For all $n\geq 0$, $\delta_n$ will denote the indicator function of the event $\{n\in\tau\}$ and sometimes we will use the notation \begin{equation}\label{imath}\imath_N = \sum_{n=1}^N \delta_n.\end{equation} Independently of $\tau$, let $\omega = (\omega_n)_{n\in\mathbb{Z}}$ be a Gaussian process with $0$ mean and variance $1$. We assume that there exists an integer $q\geq 1$ such that $\rho_n = 0$ as soon as $n>q$, where $\rho_n = \cov(\omega_0,\omega_n)$ (finite range correlations assumption). Its law will be denoted by $\mathbb{P}$. The Hamiltonian of the system at size $n\geq 1$, parameters $(\beta,h)$ in $(\mathbb{R}^+,\mathbb{R})$ and pinning potential $\omega$ is $$H_n = \sum_{k=1}^n (\beta\omega_k + h)\delta_k.$$ The corresponding (quenched) partition function is the quantity $$Z_{n,\beta,h,\omega} = E(\exp(H_n)\delta_n)$$ and the annealed partition function is $$Z^a_{n,\beta,h} = \mathbb{E}Z_{n,\beta,h,\omega}.$$ The (infinite volume) quenched and annealed free energy functions are defined respectively as $$F(\beta,h) = \lim_{n\rightarrow+\infty} (1/n) \log Z_{n,\beta,h,\omega}\geq 0$$ (in the almost sure and $L^1(\mathbb{P})$ sense) and $$F^a(\beta,h) = \lim_{n\rightarrow+\infty} (1/n) \log Z^a_{n,\beta,h}\geq 0.$$ For the existence of these limits, we refer to \cite{Poisat_frc}. The localized (resp. delocalized) phase is the region of parameters for which the quenched free energy is positive (resp. null). Both phases are separated from each other by a concave critical curve $h_c(\beta) = \sup\{h\in\mathbb{R} : F(\beta,h) = 0\}$. If one defines the annealed critical curve as $h_c^a(\beta) = \sup\{h\in\mathbb{R} : F^a(\beta,h) = 0\}$, then the following inequality holds: $h_c(\beta) \geq h_c^a(\beta)$. Disorder will be said relevant if the previous inequality is strict, and irrelevant otherwise. 
 
 The case $q=0$, which is the case of i.i.d. disorder, is the most studied one. In this setup, the annealed model reduces to the homogeneous model (the $\beta =0$ case), which is fully solvable (see \cite{GG_Book}), so all annealed features are known. In particular, $h_c^a(\beta) = -\beta^2/2$. A lot has been done lately on the issue of disorder relevance/irrelevance. For $\alpha= 0$, disorder is always irrelevant (see \cite{Alexander_loop_exp_one,Cheliotis_DenH}). If $\alpha\in(0,\frac{1}{2})$ or $\alpha = \frac{1}{2}$ and $\sum_{n\geq1} \frac{1}{nL(n)^2} < \infty$ then there exists a critical value $\beta_c >0$ such that disorder is irrelevant for $\beta \leq \beta_c$ (and in this case quenched and annealed critical exponents are the same) and relevant otherwise (see \cite{ Alexander_Sido, MR2561435, Fabio_replica, Cheliotis_DenH,  lacoin_irrelevance}). If $\alpha>\frac{1}{2}$ then disorder is always relevant (i.e $\beta_c = 0$) and we know the order of the difference between quenched and annealed critical curves for small $\beta$ (see \cite{Fabio_replica, Alexander_Sido,  Alexander_quenched, Derrida_al_relevance}). All these results have proved that the value $\alpha = \frac{1}{2}$ is critical regarding disorder relevance, a fact that corresponds in physics literature to the Harris criterion. The controversial case $\alpha = \frac{1}{2}$, with $L(\cdot)$ not subject to the previous condition, is probably the most delicate. For this we refer to the works \cite{Derrida_al_relevance,  GG_T_L_Marginal, MR2779401} and references therein. We also mention the recent work \cite{2011arXiv1104.4969B} where the quenched critical point and exponent are given for a particular environment (based on a renewal sequence) with long-range correlations.

Part of the theory has been extended to the case $q\geq1$ in \cite{Poisat_frc}, where the motivation is to study the effect of disorder correlations on the model. More precisely, the following has been proved:
\begin{theo}\label{annealed}
For all $\beta\geq 0$, $h_c^a(\beta) = -\frac{\beta^2}{2} - \log \lambda(\beta)$ where $\lambda(\cdot)$ is defined in (\ref{lambda}). Moreover, $h_c^a(\beta) \stackrel{\beta\searrow 0}{\sim} -\frac{\beta^2}{2}\left(1 + 2\sum_{n=1}^q \rho_n P(n\in\tau) \right)$.
\end{theo}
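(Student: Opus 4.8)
The plan is to integrate out the Gaussian field explicitly, exploit the finite range of $\rho$ to obtain a Markov renewal representation of $Z^a$, and then extract $h_c^a$ from a Perron--Frobenius eigenvalue. First I would condition on $\tau$ and use that $\sum_{k\le N}\omega_k\delta_k$ is then centered Gaussian with variance $\sum_{m,n\le N}\rho_{n-m}\delta_m\delta_n$, so that
\[
Z^a_{N,\beta,h}=E\Big[\exp\Big(\big(h+\tfrac{\beta^2}{2}\big)\imath_N+\beta^2\!\!\sum_{1\le m<n\le N}\!\!\rho_{n-m}\,\delta_m\delta_n\Big)\,\delta_N\Big],
\]
where $\rho_0=1$ has been used to push the diagonal into the shifted field $\tilde h:=h+\beta^2/2$. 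Since $\rho_n=0$ for $n>q$, the double sum, read along $\tau$, only couples renewal points within distance $q$; writing it as $\sum_j\phi_j$ with $\phi_j=\sum_{1\le i<j}\rho_{\tau_j-\tau_i}\mathbf 1[\tau_j-\tau_i\le q]$, each $\phi_j$ depends only on the pattern of renewal points in the window $[\tau_j-q,\tau_j)$.

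Next I would encode that pattern by the recent profile $W_j:=\{\tau_j-\tau_i:1\le i<j,\ \tau_j-\tau_i\le q\}\subseteq\{1,\dots,q\}$, so that $\phi_j=\rho^{(W_j)}$ with $\rho^{(w)}:=\sum_{n\in w}\rho_n$. Because the gaps $(T_j)$ are i.i.d., $(W_j)_{j\ge1}$ is a finite-state Markov chain with an explicit update map $W_{j+1}=F_{T_{j+1}}(W_j)$ (with $F_t(w)=\emptyset$ if $t>q$ and $F_t(w)=\{t\}\cup\{t+d:d\in w,\ t+d\le q\}$ otherwise), and $((\tau_j),(W_j))$ is a Markov renewal process; this is precisely the construction used in \cite{Poisat_frc} to solve the annealed model, which I would simply reuse. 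It follows that $Z^a_{N,\beta,h}$ is, up to a bounded boundary factor, the mass function of the Markov renewal process with kernel $\Gamma_{w,w'}(t)=e^{\tilde h}K(t)e^{\beta^2\rho^{(w')}}\mathbf 1[w'=F_t(w)]$. By the Markov renewal theorem combined with Perron--Frobenius theory --- the kernel being irreducible and aperiodic --- $F^a(\beta,h)$ is the unique $F\ge0$ at which the Perron--Frobenius eigenvalue of $\big(\sum_t e^{-Ft}\Gamma_{w,w'}(t)\big)_{w,w'}$ equals $1$, and $F^a(\beta,h)=0$ precisely when that eigenvalue at $F=0$ is $\le1$. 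At $F=0$ the matrix factorizes as $e^{\tilde h}A(\beta)$ with $A(\beta)_{w,w'}=\sum_{t:\,F_t(w)=w'}K(t)e^{\beta^2\rho^{(w')}}$, whose Perron--Frobenius eigenvalue is the function $\lambda(\beta)$ of (\ref{lambda}); hence $F^a(\beta,h)=0\iff e^{\tilde h}\lambda(\beta)\le1$, that is
\[
h_c^a(\beta)=-\log\lambda(\beta)-\tfrac{\beta^2}{2}.
\]

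For the small-$\beta$ equivalence I would perturb the Perron--Frobenius eigenvalue. At $\beta=0$, $A(0)$ is exactly the transition matrix of the profile chain, hence stochastic with simple leading eigenvalue $1$, right eigenvector $\mathbf 1$ and left eigenvector its stationary law $\pi$. Since $A(\beta)_{w,w'}=A(0)_{w,w'}e^{\beta^2\rho^{(w')}}$ is real-analytic in $\beta$ with values in a fixed finite-dimensional space, first-order perturbation gives $\lambda(\beta)=1+\beta^2E_\pi[\rho^{(W)}]+o(\beta^2)=1+\beta^2\sum_{n=1}^q\rho_n\,\pi(\{w:n\in w\})+o(\beta^2)$. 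It remains to identify $\pi(\{w:n\in w\})=P(n\in\tau)$: listing the renewal points below $\tau_j$ by their distance to $\tau_j$ is the same as forming partial sums of the reversed gap sequence $(T_j,T_{j-1},\dots)$, which is again i.i.d.\ with law $K$, so as $j\to\infty$ the stationary profile has the law of $\tau\cap\{1,\dots,q\}$ for a fresh renewal. Substituting, $-\log\lambda(\beta)=-\beta^2\sum_{n=1}^q\rho_n P(n\in\tau)+o(\beta^2)$, hence $h_c^a(\beta)=-\frac{\beta^2}{2}-\log\lambda(\beta)=-\frac{\beta^2}{2}\big(1+2\sum_{n=1}^q\rho_n P(n\in\tau)\big)+o(\beta^2)$, as claimed.

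I expect the main obstacle to be the middle step: fixing the correct finite state space (the realizable profiles), checking that $(W_j)$ is genuinely Markov with the stated update, verifying irreducibility and aperiodicity so that Perron--Frobenius and the Markov renewal theorem apply, and controlling the boundary contributions at $0$ and $N$ so that the free energy is really read off from the leading eigenvalue. Once that representation is secured, identifying $h_c^a$ is routine; the asymptotics are then soft, since analyticity of $\beta\mapsto A(\beta)$ into a fixed finite-dimensional space makes both the expansion and the uniformity of the $o(\beta^2)$ immediate, and $\pi(\{w:n\in w\})=P(n\in\tau)$ is just the reversibility of the i.i.d.\ gap sequence.
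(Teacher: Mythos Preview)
Your overall strategy is correct and is essentially the one behind the result: integrate out the Gaussian to get a positive functional of $\tau$ with finite-range interaction, cast it as a Markov renewal process on a finite state space, and read $h_c^a(\beta)$ off a Perron--Frobenius eigenvalue; then perturb at $\beta=0$ for the asymptotics. Note, however, that this paper does \emph{not} prove Theorem~\ref{annealed}: it is quoted from \cite{Poisat_frc}, whose construction is only recalled in Section~3.

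That said, your encoding is \emph{not} ``precisely the construction used in \cite{Poisat_frc}''. The paper's (and \cite{Poisat_frc}'s) state is the vector of the last $q$ interarrival times truncated to $E=\{1,\dots,q,\star\}$, with weight $e^{\beta^2 G(y)}K(y_q)$ and $G(y)=\rho_{y_1}+\rho_{y_1+y_2}+\ldots$ (so the cross-term at step $i$ collects all pairs with \emph{left} endpoint $\tau_i$), whereas you use the backward ``profile'' $W_j\subseteq\{1,\dots,q\}$ with weight $e^{\beta^2\rho^{(W_j)}}$ (collecting all pairs with \emph{right} endpoint $\tau_j$). Both decompositions sum to the same double sum $\sum_{m<n}\rho_{n-m}\delta_m\delta_n$ up to boundary terms, and hence yield the same annealed free energy; but the two transfer matrices live on different state spaces, $(q+1)^q$ versus at most $2^q$, and are not literally the same matrix. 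Your claim that the Perron--Frobenius eigenvalue of your $A(\beta)$ ``is the function $\lambda(\beta)$ of (\ref{lambda})'' therefore needs one extra line: either argue through the free energy (both eigenvalues equal $e^{-(h_c^a(\beta)+\beta^2/2)}$), or exhibit a lumping of the $E^q$-chain onto the profile chain under which $G$ factors, so that the spectral radii coincide. Your profile encoding is in fact more economical, and the small-$\beta$ step---analytic perturbation of a stochastic matrix plus the time-reversal identification $\pi(n\in W)=P(n\in\tau)$---is a clean way to obtain the coefficient $\sum_{n=1}^q\rho_n P(n\in\tau)$; in \cite{Poisat_frc} the same coefficient is reached via the $E^q$ encoding, with more bookkeeping.
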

\begin{theo}\label{irrelevance}
If $\alpha\in(0,\frac{1}{2})$ or $\alpha = \frac{1}{2}$ and $\sum_{n\geq1} \frac{1}{nL(n)^2} < \infty$ then there exists $\beta_c>0$ such that for all $\beta\leq\beta_c$, $h_c(\beta) = h_c^a(\beta)$ and $\lim_{\epsilon \searrow 0} \frac{\log F(\beta,h_c(\beta)+\epsilon)}{\log\epsilon} = \frac{1}{\alpha}$.
\end{theo}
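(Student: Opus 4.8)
By concavity of the logarithm and Jensen's inequality, $F(\beta,h)\le F^a(\beta,h)$, hence $h_c(\beta)\ge h_c^a(\beta)$ --- the inequality already recalled in the introduction. Moreover, the Markov renewal representation of the annealed model behind Theorem~\ref{annealed} (see \cite{Poisat_frc}) exhibits $Z^a_{n,\beta,h}$ as the partition function of a homogeneous pinning model whose driving Markov renewal has inter-arrival exponent $\alpha$ and the same slowly varying correction $L$; classical (Markov) renewal estimates then give $F^a(\beta,h_c^a(\beta)+\epsilon)\asymp\epsilon^{1/\alpha}\hat L(1/\epsilon)$ for a slowly varying $\hat L$, and the annealed return probability behaves like $k^{\alpha-1}/L(k)$ on scales up to the annealed correlation length at $h=h_c^a(\beta)+\epsilon$. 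In particular $F\le F^a$ already forces $\liminf_{\epsilon\searrow0}\frac{\log F(\beta,h_c^a(\beta)+\epsilon)}{\log\epsilon}\ge\frac1\alpha$. It therefore remains to prove, for $\beta$ small, the two matching facts: \emph{(i)} $h_c(\beta)\le h_c^a(\beta)$, and \emph{(ii)} $F(\beta,h_c^a(\beta)+\epsilon)\ge c\,\epsilon^{1/\alpha}$ for all small $\epsilon>0$. Both come out of a single coarse-graining plus second moment argument, transposing to the Markov renewal setting the coarse-graining and second moment arguments developed for the i.i.d.\ case (\cite{Alexander_Sido, lacoin_irrelevance} and references therein).

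Fix $h=h_c^a(\beta)+\epsilon$ and let $\ell=\ell(\epsilon)$ be the annealed correlation length, defined by $F^a(\beta,h)\,\ell\asymp1$; by the above $\ell\asymp\epsilon^{-1/\alpha}$ up to a slowly varying factor, so $\ell\gg q$ for $\epsilon$ small. Partition $\{1,\dots,N\}$ into $N/\ell$ blocks of length $\ell$ and bound $Z_{N,\beta,h,\omega}$ from below by forcing $\tau$ to hit the right endpoint of each block; this factorizes $Z_N$ into a product of block partition functions $Z^{(1)},\dots,Z^{(N/\ell)}$, where $Z^{(i)}$ is a translate of $Z_{\ell,\beta,h,\omega}$ depending on $\omega$ only through the $i$-th block and its $q$-neighbourhood. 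For one block I run a \emph{finite-volume} second moment estimate. Using that $\omega$ is Gaussian, and with $\bar Z_\ell=Z_{\ell,\beta,h,\omega}/Z^a_{\ell,\beta,h}$ (so $\mathbb{E}\bar Z_\ell=1$), the one-body terms cancel against the normalization and one obtains
\[
\mathbb{E}\big[\bar Z_\ell^{\,2}\big]\;=\;E^{a,\otimes2}_\ell\!\left[\exp\Big(\beta^2\!\!\sum_{\substack{1\le j,k\le\ell\\ |j-k|\le q}}\!\!\rho_{j-k}\,\delta^{(1)}_j\delta^{(2)}_k\Big)\right]\;\le\;E^{a,\otimes2}_\ell\!\left[\exp\big(C_q\,\beta^2\,N_{\mathrm{cl}}\big)\right],
\]
where $E^{a,\otimes2}_\ell$ is the law of two independent copies $\tau^{(1)},\tau^{(2)}$ of the annealed polymer of length $\ell$, $N_{\mathrm{cl}}$ is the number of \emph{close encounters} up to time $\ell$ (pairs of points of $\tau^{(1)}$ and $\tau^{(2)}$ at distance $\le q$), and $C_q<\infty$ depends only on $q$ and $(\rho_n)_{n\le q}$.

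The successive close-encounter epochs, decorated by the finite state recording the relative pattern of $\tau^{(1)}$ and $\tau^{(2)}$ inside the current window of width $q$, form a Markov renewal process with finite state space --- exactly the construction of \cite{Poisat_frc} for the annealed model, now applied to the ``intersection'' of two annealed polymers; its growth is governed, via Perron--Frobenius theory, by the associated nonnegative matrix. The key point is that this Markov renewal is transient if and only if the expected total number of close encounters is finite, i.e. (using that the annealed return probability is $\asymp k^{\alpha-1}/L(k)$) if and only if $\sum_k\big(k^{\alpha-1}/L(k)\big)^2<\infty$ --- automatic when $\alpha<\tfrac12$, and exactly the condition $\sum_n 1/(nL(n)^2)<\infty$ when $\alpha=\tfrac12$, i.e. precisely the hypothesis of the theorem. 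In the transient case, finiteness of the state space forces $N_{\mathrm{cl}}$ (over all times) to be stochastically dominated by a geometric variable, so $E^{a,\otimes2}_\ell[\exp(C_q\beta^2 N_{\mathrm{cl}})]$ is bounded uniformly in $\ell$ provided $C_q\beta^2$ is below a threshold; hence there is $\beta_c>0$ such that $\sup_\ell\mathbb{E}[\bar Z_\ell^{\,2}]\le C<\infty$ for all $\beta\le\beta_c$, uniformly in $\epsilon$.

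To conclude, the uniform bound $\mathbb{E}[(Z^{(i)}/Z^a_\ell)^2]\le C$ and the Paley--Zygmund inequality give $\mathbb{P}(Z^{(i)}\ge\tfrac12 Z^a_\ell)\ge p>0$ for each block; since $\ell\gg q$, these events are independent for non-adjacent blocks and almost independent for adjacent ones, so an ergodic/concatenation argument --- controlling also the negative part of $\log Z^{(i)}$, which is bounded below by a polynomial term plus $\beta\omega$ --- shows that a positive density of blocks is good, almost surely. Keeping only the good blocks, $Z_N\ge(c\,Z^a_\ell)^{\#\{\text{good blocks}\}}$ with $Z^a_\ell\asymp e^{F^a(\beta,h)\ell}\asymp e^{O(1)}$, whence $F(\beta,h)=\lim_N\tfrac1N\log Z_N\ge c'/\ell\asymp\epsilon^{1/\alpha}$ up to a slowly varying factor; this is \emph{(ii)}, and since $F(\beta,h_c^a(\beta)+\epsilon)>0$ for all small $\epsilon>0$ we get $h_c(\beta)\le h_c^a(\beta)+\epsilon$, hence \emph{(i)} after $\epsilon\to0$. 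Combined with $F\le F^a\asymp\epsilon^{1/\alpha}\hat L$, \emph{(i)} and \emph{(ii)} give $h_c(\beta)=h_c^a(\beta)$ and $\frac{\log F(\beta,h_c(\beta)+\epsilon)}{\log\epsilon}\to\frac1\alpha$, as claimed. The step I expect to be the main obstacle is the construction and control of the close-encounter Markov renewal in the correlated case: unlike the i.i.d.\ case, where the replica interaction lives on the ordinary renewal $\tau^{(1)}\cap\tau^{(2)}$, here it is spread over windows of width $q$, and one must choose the right finite state space, check that the associated matrix is eventually irreducible and aperiodic so that Perron--Frobenius applies and the Markov renewal is transient \emph{exactly} under the stated summability hypothesis, and ensure that $N_{\mathrm{cl}}$ retains uniform exponential moments under the extra reward when $\beta\le\beta_c$. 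A secondary technical point is the block concatenation, which relies on $\ell\gg q$ and on a uniform lower bound for the negative part of $\log Z^{(i)}$.
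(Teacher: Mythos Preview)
The present paper does not prove Theorem~\ref{irrelevance}: it is quoted, together with Theorem~\ref{annealed}, as a result already established in the author's earlier work \cite{Poisat_frc} (see the sentence immediately preceding the two statements). The paper at hand is devoted exclusively to the complementary disorder-\emph{relevant} regime, Theorems~\ref{relevance1}--\ref{relevance2}. There is therefore no proof here against which to compare your sketch.

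For what it is worth, your outline --- second-moment control of the normalized partition function via a close-encounter process for two replicas, combined with Paley--Zygmund and coarse-graining --- is the strategy that the i.i.d.\ literature uses and that \cite{Poisat_frc} adapts, and you have correctly singled out the genuinely new difficulty: the replica interaction is no longer supported on the ordinary intersection renewal $\tau^{(1)}\cap\tau^{(2)}$ but on a window of width $q$, so one must build a Markov-decorated close-encounter object on a finite state space and show that its transience is \emph{equivalent} to the summability hypothesis. Your sketch is honest about being a sketch; the two places that would require the most work to make rigorous are (a) the construction of that close-encounter Markov renewal and the identification of suitable regeneration times (so that Perron--Frobenius actually applies and yields geometric tails for $N_{\mathrm{cl}}$), and (b) the block concatenation, where in the finite-range Gaussian setting one should insert buffers of width at least $q$ between blocks to get genuine independence rather than rely on ``almost independence'' of adjacent blocks.
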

Theorem \ref{annealed} shows that correlations can modify the critical curves in a quantitative way, even at the leading order in $\beta$ whereas Theorem \ref{irrelevance} suggests that the Harris criterion remains valid. Moreover, it is shown in \cite[Proposition 5.1]{Poisat_frc} that the annealed critical exponent remains the same as in the homogeneous case (the proof is done for $0\leq\alpha\leq 1/2$ but it is straightforward to adapt it to $\alpha> 1/2$) More precisely we have
\begin{pr}\label{ann_expo}
 For all $\beta\geq0$, there exists a slowly varying function $L_{\beta}$ such that
 \begin{equation*}
  F^a(\beta,h_c^a(\beta)+\Delta) \stackrel{\Delta\searrow0}{\sim} L_{\beta}(1/\Delta)\Delta^{\max(1,1/\alpha)}.
  \end{equation*}
\end{pr}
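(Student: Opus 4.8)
The plan is to use the Markov renewal representation of the annealed partition function from \cite{Poisat_frc}. After integrating out the Gaussian field and using the finite range assumption $\rho_m=0$ for $m>q$, one rewrites $Z^a_{n,\beta,h}$ as the partition function of a Markov renewal process on a finite state space $\mathcal S$ (the configurations of $\tau$ in a window of width $q$): there is a family of non-negative kernels $\Gamma^{\beta,h}_n(x,y)$, $n\ge1$, $x,y\in\mathcal S$, each of the form $e^{h}$ times a bounded correlation weight times a restriction of (a convolution power of) $K$, such that $Z^a_{n,\beta,h}$ is comparable to the mass the associated Markov renewal puts on $n$. Setting $\widehat\Gamma^{\beta,h}_F:=\sum_{n\ge1}e^{-Fn}\Gamma^{\beta,h}_n$ for $F\ge0$, the annealed free energy $F^a(\beta,h)$ is the unique $F\ge0$ for which the Perron--Frobenius eigenvalue of the irreducible non-negative matrix $\widehat\Gamma^{\beta,h}_F$ equals $1$ (and $F^a=0$ when that eigenvalue is already $<1$ at $F=0$). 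Factoring the $h$-dependence out as an overall $e^{h}$, this reads $e^{h}\phi(F)=1$, where $\phi(F)$ is the Perron eigenvalue of $\widehat\Gamma^{\beta,0}_F$; in particular $h_c^a(\beta)=-\log\phi(0)$, consistently with Theorem \ref{annealed}. Hence $F:=F^a(\beta,h_c^a(\beta)+\Delta)$ is the solution of $\log\phi(0)-\log\phi(F)=\Delta$, and it suffices to find the behaviour of $\phi(0)-\phi(F)$ as $F\searrow0$ and invert.

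Let $\ell,r>0$ be the left and right Perron eigenvectors of $\widehat\Gamma^{\beta,0}_0$ normalised by $\langle\ell,r\rangle=1$. Analytic perturbation theory for a simple isolated eigenvalue gives
\begin{equation*}
\phi(0)-\phi(F)=\big\langle\ell,(\widehat\Gamma^{\beta,0}_0-\widehat\Gamma^{\beta,0}_F)\,r\big\rangle+O\!\left(\big\|\widehat\Gamma^{\beta,0}_0-\widehat\Gamma^{\beta,0}_F\big\|^2\right),
\end{equation*}
and since $\widehat\Gamma^{\beta,0}_0-\widehat\Gamma^{\beta,0}_F=\sum_{n\ge1}(1-e^{-Fn})\Gamma^{\beta,0}_n\to0$ as $F\searrow0$, the remainder is of strictly smaller order than the first term. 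Writing $a_n:=\langle\ell,\Gamma^{\beta,0}_n r\rangle\ge0$, the explicit form of the kernel entries together with the fact that the Markov renewal inherits the tail of $K$ gives $a_n\sim c_*\,L(n)\,n^{-(1+\alpha)}$ for some constant $c_*>0$, so everything reduces to the scalar estimate of $g(F):=\sum_{n\ge1}(1-e^{-Fn})a_n$ as $F\searrow0$. By the classical Abelian/Tauberian results (see e.g.\ \cite{GG_Book}),
\begin{equation*}
g(F)\ \sim\
\begin{cases}
\dfrac{\Gamma(1-\alpha)}{\alpha}\,c_*\,F^{\alpha}L(1/F), & 0<\alpha<1,\\[2.2ex]
c_*\,F\,\widetilde L(1/F),\quad\widetilde L(x):=\displaystyle\int_1^x\frac{L(t)}{t}\,dt, & \alpha=1,\\[2.2ex]
\mu_*\,F,\quad\mu_*:=\displaystyle\sum_{n\ge1}n\,a_n<\infty, & \alpha>1,
\end{cases}
\end{equation*}
so that in every case $g(F)=F^{\min(1,\alpha)}\widehat L(1/F)$ for a suitable slowly varying $\widehat L$.

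Finally, since $\log\phi(0)-\log\phi(F)\sim\phi(0)^{-1}(\phi(0)-\phi(F))\sim\phi(0)^{-1}g(F)$, the relation defining $F$ becomes $\Delta\sim\phi(0)^{-1}F^{\min(1,\alpha)}\widehat L(1/F)$; inverting this equivalence between regularly varying functions (Karamata's asymptotic inversion) yields $F\sim\Delta^{1/\min(1,\alpha)}L_\beta(1/\Delta)=\Delta^{\max(1,1/\alpha)}L_\beta(1/\Delta)$ for an appropriate slowly varying $L_\beta$, which is the assertion since $F=F^a(\beta,h_c^a(\beta)+\Delta)$. I expect the delicate point to be the reduction to the scalar quantity $g(F)$: one must check both that the Perron-eigenvalue perturbation is to leading order $\langle\ell,(\cdot)\,r\rangle$ with a genuinely negligible remainder --- which is not completely obvious when $\alpha<1$, since then $F\mapsto\widehat\Gamma^{\beta,0}_F$ is not differentiable at $0$ and $g(F)\gg F$ --- and that the weighted entrywise tails of the Markov renewal kernel are indeed $\asymp L(n)n^{-(1+\alpha)}$ with a strictly positive constant, i.e.\ that no cancellation occurs; both follow from properties of $K$ and of slowly varying functions already available in \cite{Poisat_frc}. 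The remaining Tauberian bookkeeping (in particular the borderline case $\alpha=1$) and the asymptotic inversion are routine.
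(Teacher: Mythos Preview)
The paper does not give its own proof of this proposition: the surrounding text simply records that it is \cite[Proposition~5.1]{Poisat_frc}, proved there for $0\le\alpha\le 1/2$ and asserted to extend straightforwardly to $\alpha>1/2$. So there is no in-paper argument to compare against line by line.

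Your spectral route is correct and self-contained. Using the construction recalled in Section~3, the semi-Markov kernel factorises so that for $n>q$ one has $\Gamma^{\beta,0}_n(x,y)=e^{\beta^2/2+\beta^2 G(y)}K(n)\mathbf{1}_{\{x\rightsquigarrow y,\ y_q=\star\}}$; hence $a_n=c_*K(n)$ exactly for $n>q$ with $c_*=\sum_{y_q=\star}\sum_{x\rightsquigarrow y}\ell(x)e^{\beta^2/2+\beta^2G(y)}r(y)>0$, so the ``no cancellation'' worry you flag is immediate here. The Perron perturbation step is also fine: the simple eigenvalue is an analytic function of the matrix entries, so $\phi(0)-\phi(F)=\langle\ell,(\widehat\Gamma_0-\widehat\Gamma_F)r\rangle+O(\|\widehat\Gamma_0-\widehat\Gamma_F\|^2)$ holds regardless of whether $F\mapsto\widehat\Gamma_F$ is differentiable at $0$; the remainder is $o(g(F))$ since $g(F)\to0$. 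The Tauberian case split and the asymptotic inversion are standard.

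Compared with the approach of \cite{Poisat_frc} as reflected in this paper, the difference is mainly organisational. There the annealed partition function at $h=h_c^a(\beta)+\Delta$ is rewritten, via the change of law $P\to P_\beta$, as the partition function of a (Markov) renewal model at ``homogeneous'' pinning strength $\Delta$, whose kernel inherits the tail $\sim c(\beta)K(n)$ (cf.\ the relation $K^{(x)}_\beta(n)\sim c(\beta,x)K(n)$ used later); one then invokes the homogeneous critical-behaviour computation from \cite{GG_Book}. Your argument bypasses the intermediate probabilistic reinterpretation and works directly with the finite matrix $\widehat\Gamma^{\beta,0}_F$, which makes the extension to all $\alpha>0$ (including $\alpha=1$) a single unified computation rather than an ``adaptation''. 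Both proofs ultimately rest on the same two facts: the free energy is characterised by $e^h\phi(F)=1$, and the Markov-renewal kernel has the tail of $K$.
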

The idea in \cite{Poisat_frc} is to exhibit a Markov renewal structure to solve the annealed model (by solve we mean find critical points and exponents). The purpose of this paper is to show how one can also use this construction to generalize to our case the results of disorder relevance obtained in the case of i.i.d disorder. This complements our study of the model with finite range correlated disorder.

\section{Results}

The following results were first obtained in the case of i.i.d disorder. We show that they also hold in our case.
\begin{theo}\label{relevance1}
Let $\alpha\in(0,\frac{1}{2})$. There exists $\beta_0<\infty$ such that for all $\beta>\beta_0$, $h_c(\beta) > h_c^a(\beta)$.
\end{theo}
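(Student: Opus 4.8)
The strategy I would follow is the fractional moment method combined with a change of measure, which is the standard approach for proving strict inequality between quenched and annealed critical points in the i.i.d. case. The key point is that for $\gamma \in (0,1)$ one has, by concavity of $x \mapsto x^\gamma$, that $\mathbb{E}(Z_{N,\beta,h,\omega}^\gamma) \geq (\mathbb{E} Z_{N,\beta,h,\omega})^\gamma$ trivially goes the wrong way, so instead one uses $F(\beta,h) \leq \frac{1}{\gamma}\liminf_N \frac{1}{N}\log \mathbb{E}(Z_{N,\beta,h,\omega}^\gamma)$ (from Jensen applied to $\frac{1}{N}\log(\cdot)$ after raising to the power $\gamma$), so it suffices to exhibit, for $h$ slightly above $h_c^a(\beta)$ and $\beta$ large, a value of $\gamma$ for which $\mathbb{E}(Z_{N,\beta,h,\omega}^\gamma)$ stays bounded (or grows subexponentially) in $N$. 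To bound the fractional moment one tilts the disorder law: write $\mathbb{E}(Z_N^\gamma) = \tilde{\mathbb{E}}\big((\frac{d\mathbb{P}}{d\tilde{\mathbb{P}}}) Z_N^\gamma\big) \leq (\tilde{\mathbb{E}}(\frac{d\mathbb{P}}{d\tilde{\mathbb{P}}})^{1/(1-\gamma)})^{1-\gamma}(\tilde{\mathbb{E}} Z_N)^\gamma$, where $\tilde{\mathbb{P}}$ is a Gaussian measure with the same covariance structure but with a negative mean profile localized on the relevant scale, chosen so that the Radon–Nikodym cost is controlled while $\tilde{\mathbb{E}} Z_N$ — which is an annealed partition function under a depressed field — is small.

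Here is where the Markov renewal structure from \cite{Poisat_frc} enters, and this is the step I expect to be the real work. Because the disorder is $q$-dependent rather than i.i.d., the annealed partition function is not simply a homogeneous pinning partition function; instead, as shown in \cite{Poisat_frc}, it is encoded by a Markov renewal process whose kernel involves a Perron–Frobenius eigenvalue $\lambda(\beta)$ of a finite ($q$-dimensional, or rather indexed by the configuration of the last $q$ increments truncated at $q$) transfer operator. So after the change of measure I would need to re-derive a Markov renewal representation for $\tilde{\mathbb{E}} Z_N$ under the shifted Gaussian law and show that the corresponding Perron–Frobenius eigenvalue is strictly less than $1$ when $h$ is at $h_c^a(\beta) + \delta$ for small $\delta$ and the mean-shift is suitably tuned, \emph{provided $\beta$ is large enough}. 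The large-$\beta$ regime is what makes this possible without any smallness of $\alpha$-dependent series: a substantial negative tilt of the field on blocks of size of order the correlation length costs only polynomially in the block size in Radon–Nikodym terms (the Gaussian cost of shifting the mean by $-c$ on $\ell$ coordinates is of order $\exp(c^2 \ell)$, and with $c$ small and $\ell$ large but the product under control), while it depresses the annealed free energy by an amount proportional to $\beta^2$ through the $\log\lambda(\beta)$ term, which grows with $\beta$; for $\alpha < 1/2$ the renewal-theoretic entropy gain from the rare returns is not enough to compensate once $\beta$ is big.

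Concretely the steps are: (1) recall the coarse-graining / fractional moment reduction: it is enough to find $\gamma \in (0,1)$, a block length $\ell = \ell(\beta)$, and a tilted law $\tilde{\mathbb{P}}$ such that the per-block contribution $A_\ell := (\tilde{\mathbb{E}}[(d\mathbb{P}/d\tilde{\mathbb{P}})^{1/(1-\gamma)}])^{1-\gamma}\,(\tilde{\mathbb{E}} Z_\ell)^\gamma$ satisfies $A_\ell \sum_{k} K(k)^\gamma$-type bound $<1$, using the standard sub-multiplicativity of $Z_N$ over the renewal points and $\gamma < 1$; (2) choose $\tilde{\mathbb{P}}$ to be the Gaussian law with covariance $\rho$ unchanged but mean $-\delta_\beta$ on each coordinate of the block (a block-constant negative shift), compute the Radon–Nikodym cost explicitly — it is Gaussian, hence $\exp(O(\delta_\beta^2 \ell))$ up to the correlation corrections, which are harmless since $q$ is fixed; (3) bound $\tilde{\mathbb{E}} Z_\ell$ using Theorem 1.1 and Proposition 1.2: under the shift, the effective pinning parameter becomes $h - \beta\delta_\beta$ (plus lower-order correlation corrections), so $\tilde{\mathbb{E}} Z_\ell$ behaves like the annealed partition function at $h_c^a(\beta) + \delta - \beta\delta_\beta$, which for $\beta\delta_\beta > \delta$ is in the delocalized annealed phase and hence $\tilde{\mathbb{E}} Z_\ell \leq C$ (bounded, in fact $\to 0$ polynomially); (4) optimize: pick $\delta_\beta$ so that $\beta\delta_\beta$ is, say, twice $\delta$ but $\delta_\beta \to 0$, which is possible for $\beta$ large, and check that the Gaussian cost $\exp(O(\delta_\beta^2 \ell))$ is beaten by the smallness of $(\tilde{\mathbb{E}} Z_\ell)^\gamma$ and the renewal factor once $\ell$ is taken of the right order; this is exactly where $\beta > \beta_0$ is used and where $\alpha < 1/2$ guarantees the renewal sum $\sum_k K(k)^\gamma / (\text{something})$ converges for $\gamma$ close enough to $1$. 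I would handle the bookkeeping of the $q$-dependence by always working at the level of the Markov renewal kernel of \cite{Poisat_frc} rather than with $Z_N$ directly, so that "annealing under a tilted law" just amounts to replacing $\lambda(\beta)$ and the kernel by their tilted analogues and checking the Perron–Frobenius eigenvalue drops below $1$. The main obstacle, to repeat, is verifying that the finite-range-correlation corrections in both the Radon–Nikodym cost and in the tilted annealed partition function are genuinely lower order (uniformly in the block, using only $q < \infty$ and the Gaussianity), so that the i.i.d. argument goes through with $-\beta^2/2$ replaced by $-\beta^2/2 - \log\lambda(\beta)$.
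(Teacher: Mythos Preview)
Your plan takes a different route from the paper, and parts of the sketch are off.

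The paper does \emph{not} use a change of measure for Theorem~2.1. It uses the pure fractional moment method (no tilting, no coarse-graining), following Toninelli's i.i.d.\ argument. One applies $(\sum a_i)^\gamma\le\sum a_i^\gamma$ directly to the renewal decomposition of $Z_N$ and then anneals each term; in the correlated setting this produces a finite transfer matrix $\hat Q^*_{\beta,\gamma}$ on $E^q$, and the conclusion follows once its Perron--Frobenius eigenvalue $\Lambda(\beta,\gamma)$ is $<1$ for some $\gamma\in(1/(1+\alpha),1)$. Since $\Lambda(\beta,1)=1$, it suffices to show $\partial_\gamma\Lambda(\beta,\gamma)\big|_{\gamma=1^-}>0$ for large $\beta$. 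This derivative is computed explicitly and reorganized as
\[
\tfrac{\beta^2}{2}+\beta^2 E_{\pi_\beta}(G(X)) - h(\tilde Q^*_\beta) - h(K_q)\,P_{\pi_\beta}(X_q=\star),
\]
an energy-minus-entropy quantity. The two entropy terms are bounded (finite state space, $\alpha>0$), and a specific-relative-entropy identity gives $\tfrac{\beta^2}{2}+\beta^2 E_{\pi_\beta}(G(X))\ge -h_c^a(\beta)\to+\infty$, which finishes the proof. The change-of-measure machinery you describe is exactly what the paper uses for Theorems~2.2 and~2.3 ($\alpha>1/2$), not here.

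Your sketch also has scaling inconsistencies that would have to be fixed before it could go through. The Radon--Nikodym cost of a constant shift $-\delta_\beta$ on a block of length $\ell$ is $\exp(c\,\delta_\beta^2\ell/(1-\gamma))$, which is exponential in $\ell$, not polynomial; to keep it bounded you must take $\delta_\beta\sim 1/\sqrt{\ell}$, and then the depression of the effective pinning field is $O(\beta/\sqrt{\ell})$, not ``proportional to $\beta^2$ through $\log\lambda(\beta)$'' as you write. For $\alpha<1/2$ the tilted annealed partition function in the delocalized phase decays only polynomially, so the balance in your step~(4) is delicate and does not obviously close --- this is precisely why the change-of-measure route is the natural one for $\alpha>1/2$ (many renewal visits, so the tilt bites hard) while the direct energy/entropy route is used here. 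What the paper's approach buys is that the large-$\beta$ gain is completely transparent: it comes from the factor $e^{\gamma(\gamma-1)\beta^2/2}$ sitting in $\hat Q^*_{\beta,\gamma}$, with no block length or tilt to optimize.
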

Theorem \ref{relevance1} is also true for $\alpha>\frac{1}{2}$, but in this case we have stronger results:
\begin{theo}\label{relevance3}
 Let $\alpha\in(\frac{1}{2},1)$. For all $\beta>0$, $h_c(\beta) > h_c^a(\beta)$ .
\end{theo}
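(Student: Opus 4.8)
The plan is to combine the fractional moment method with a coarse-graining step and a change of measure, following the strategy developed for i.i.d.\ disorder in the range $\alpha\in(\tfrac12,1)$, but run over the Markov renewal structure of \cite{Poisat_frc} and for a Gaussian environment. Fix $\beta>0$ and, for a small $u>0$ to be chosen, set $h=h_c^a(\beta)+u$. Since $F(\beta,h)$ is the $L^1$-limit of $\frac1N\log Z_{N,\beta,h,\omega}$ (so $F(\beta,h)=\lim_N\frac1N\mathbb{E}\log Z_{N,\beta,h,\omega}$) and, by Jensen, $\mathbb{E}\log Z_{N,\beta,h,\omega}\le\frac1\gamma\log\mathbb{E}\big[Z_{N,\beta,h,\omega}^\gamma\big]$ for every $\gamma\in(0,1)$, it suffices to exhibit one $\gamma\in(0,1)$ and one $u>0$ with $\sup_N\mathbb{E}\big[Z_{N,\beta,h,\omega}^\gamma\big]<\infty$: this forces $F(\beta,h_c^a(\beta)+u)=0$, hence $h_c(\beta)\ge h_c^a(\beta)+u>h_c^a(\beta)$. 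I fix $\gamma\in\big(\tfrac1{1+\alpha},1\big)$ once and for all, kept bounded away from $1$ (possible because $\alpha>0$).

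Then I would coarse-grain. Partition $\{1,\dots,N\}$ into consecutive blocks $B_1,\dots,B_{N/\ell}$ of length $\ell$, and for a set $I$ of block indices containing the last one let $Z_{N,I}$ be the contribution to $Z_{N,\beta,h,\omega}$ of the renewal trajectories that end at $N$ and whose set of visited blocks is exactly $I$, so that $Z_{N,I}$ depends on $\omega$ only through $(\omega_k)_{k\in\bigcup_{j\in I}B_j}$. Then $Z_{N,\beta,h,\omega}=\sum_I Z_{N,I}$ and, by subadditivity of $x\mapsto x^\gamma$, $\mathbb{E}\big[Z_{N,\beta,h,\omega}^\gamma\big]\le\sum_I\mathbb{E}\big[Z_{N,I}^\gamma\big]$. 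For each $I$, let $\widetilde{\mathbb{P}}_I$ be the law under which $(\omega_k)_{k\in\bigcup_{j\in I}B_j}$ is Gaussian with unchanged covariance but mean $-\delta$, for a $\delta>0$ to be chosen. Hölder's inequality then gives
\begin{equation*}
\mathbb{E}\big[Z_{N,I}^\gamma\big]\le\Big(\widetilde{\mathbb{E}}_I\big[(d\mathbb{P}/d\widetilde{\mathbb{P}}_I)^{1/(1-\gamma)}\big]\Big)^{1-\gamma}\big(\widetilde{\mathbb{E}}_I[Z_{N,I}]\big)^\gamma .
\end{equation*}
The entropy factor is a Gaussian computation: because $\rho_n=0$ for $n>q$, the relevant covariance matrix is banded, the Radon--Nikodym density is the exponential of a linear form in $\omega$, and the cost works out to $\exp\!\big(O(\delta^2\ell)\big)$ per block of $I$ (with $\gamma$ bounded away from $1$; the $O(q)$ boundary cross-terms between adjacent blocks are harmless).

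For the second factor I would use the Markov renewal representation of the annealed partition function from \cite{Poisat_frc}: lowering the mean of $\omega$ by $\delta$ on the visited blocks turns $\widetilde{\mathbb{E}}_I[Z_{N,I}]$ into a transfer-operator partition function at effective pinning parameter $h-\beta\delta$, restricted to the blocks of $I$, which still factorises over $I$ up to matrix Perron--Frobenius corrections. Combining this with the entropy factor and summing over $I$ bounds $\mathbb{E}\big[Z_{N,\beta,h,\omega}^\gamma\big]$ by a coarse-grained renewal series, exactly as in the i.i.d.\ case. By Proposition \ref{ann_expo} the annealed correlation length at $h_c^a(\beta)+u$ is $\asymp u^{-1/\alpha}$ up to a slowly varying factor, so I would calibrate $\ell\asymp u^{-1/\alpha}$ and $\delta\asymp u$, with the constant in $\delta$ large enough that $h-\beta\delta=h_c^a(\beta)-\varepsilon$ with $\varepsilon\asymp u>0$. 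With this calibration: the entropy cost per block is $\exp\!\big(O(u^{2-1/\alpha})\big)$, which stays bounded (indeed tends to $1$) as $u\searrow0$ \emph{precisely because $\alpha>\tfrac12$}, i.e.\ $2-1/\alpha>0$; the $\gamma$-th powers of the coarse interarrivals are summable since $\gamma(1+\alpha)>1$; and the effective pinning being at distance $\asymp u$ below the annealed critical point, on a block scale comparable to the corresponding correlation length, supplies an order-one deficit in the coarse-grained kernel that makes the renewal series converge uniformly in $N$ once $u$ is small. This gives $\sup_N\mathbb{E}\big[Z_{N,\beta,h_c^a(\beta)+u,\omega}^\gamma\big]<\infty$, hence $F(\beta,h_c^a(\beta)+u)=0$ and $h_c(\beta)>h_c^a(\beta)$. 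As $\beta>0$ was arbitrary and $\ell$ grows freely as $u\searrow0$, every $\beta>0$ is covered --- this is exactly what the coarse-graining buys over a single change of measure, and why the conclusion holds for all $\beta>0$ here, whereas Theorem \ref{relevance1} would only give it for $\beta$ large in this range.

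The step I expect to be the real obstacle is the interface between the coarse-graining --- naturally phrased through the renewal $\tau$ --- and the Markov renewal / Perron--Frobenius machinery of \cite{Poisat_frc}: one must check that the tilted annealed partition function restricted to a prescribed family of visited blocks still factorises through transfer matrices whose spectral radii are controlled uniformly in $I$ and $\ell$, that the size-$\le q$ boundary contributions at block edges (and the interplay between $q$ and $\ell$) are negligible, and --- quantitatively the most delicate point --- that the sub-criticality deficit really dominates the entropy cost $\exp(O(u^{2-1/\alpha}))$, i.e.\ that the i.i.d.\ cost-versus-gain balance survives in the matrix-weighted setting. Everything else --- the Gaussian entropy estimate, the subadditivity and Hölder steps, and the final renewal summation --- should be routine once this is in place.
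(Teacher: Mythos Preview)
Your proposal is plausible but takes a genuinely different route from the paper. The paper does \emph{not} coarse-grain into blocks indexed by sets $I$ of visited blocks. Instead it introduces an auxiliary renewal $\hat\tau\subset\tau$ consisting of those renewal points preceded by a stretch strictly larger than $q$, together with partition functions $\tilde Z_j$, $\hat Z_j$ restricted accordingly. The point of $\hat\tau$ is that it yields an \emph{exact} renewal-type decomposition $\tilde Z_n=\sum_{l<k}\sum_{r\ge k}\tilde Z_l\,\hat Z_{r-l}(\theta^l\omega)\,\tilde Z_{n-r}(\theta^r\omega)$ in which the three factors are $\mathbb P$-independent, because a gap $>q$ decorrelates the disorder. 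Taking $\gamma$-th powers and expectations gives a renewal inequality for $A_n=\mathbb E\tilde Z_n^\gamma$, reducing the problem to showing $\varrho:=\sum_{r\ge k}\sum_{l<k}\hat K(r-l)A_l\le 1$ (Lemma \ref{lemma_rho}). Each $A_j$ is then bounded by a \emph{single} change of measure on $[0,j]$ with tilt $\lambda=1/\sqrt j$: H\"older gives $A_j\le(\mathbb E_{j,1/\sqrt j}\tilde Z_j)^\gamma e^{c\gamma/(1-\gamma)}$, and the tilted expectation is controlled under $P_\beta$ via the sub-renewals $\tau^{(x)}$ and Proposition~A.2 of \cite{Derrida_al_relevance}, yielding $A_j\le c\,L(j)^\gamma j^{-\gamma\alpha}$. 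With $k=F^a(\beta,h_c^a(\beta)+\Delta)^{-1}$ the sum $\varrho$ then goes to $0$ as $\Delta\searrow0$ provided $\gamma(2\alpha+1)>2$, i.e.\ $\gamma$ close to $1$ and $\alpha>1/2$.

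What the paper's route buys: the $\hat\tau$-decomposition is tailored to finite-range correlations and gives exact independence of the factors, so all the block-boundary issues you flag simply disappear; and the change of measure is applied once per window, with the transfer-matrix machinery entering only through $P_\beta$ and the estimate on $A_j$. What your route would buy: a more modular coarse-graining, closer to the later multi-block arguments. The obstacle you identify---that the tilted annealed partition function restricted to a prescribed set of visited blocks must still factorise through transfer matrices with uniformly controlled spectral radii, and that the cost/gain balance must survive in this matrix-weighted setting---is the substantive technical work in your approach and is not just bookkeeping; the paper sidesteps it entirely via $\hat\tau$.
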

\begin{theo}\label{relevance2}
Let $\alpha>1$. There exists $a>0$ such that for all $\beta\leq1$, $h_c(\beta) \geq h_c^a(\beta) + a\beta^2$. Moreover, $h_c(\beta)>h_c^a(\beta)$ for all $\beta>0$.
\end{theo}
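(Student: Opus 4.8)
The strategy is the \emph{fractional moment method combined with a change of measure}, implemented through the Markov renewal coarse-graining used to solve the annealed model in \cite{Poisat_frc}. It is enough to prove the first assertion: for a suitable $a>0$ we show $F(\beta,h)=0$ whenever $\beta\le1$ and $h=h_c^a(\beta)+a\beta^2$, which gives $h_c(\beta)\ge h_c^a(\beta)+a\beta^2$ and in particular $h_c(\beta)>h_c^a(\beta)$ for $\beta\in(0,1]$; the range $\beta>1$ of the ``moreover'' is recovered by the same scheme run at fixed $\beta$ together with continuity of $h_c-h_c^a$. Fix $\gamma\in(1/\alpha,1)$, which is possible because $\alpha>1$. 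Since $(1/n)\log Z_{n,\beta,h,\omega}\to F(\beta,h)$ in $L^1(\mathbb P)$ and $\log$ is concave, Jensen's inequality gives
\[
F(\beta,h)\ \le\ \frac1\gamma\,\limsup_{n\to\infty}\ \frac1n\,\log\mathbb E\big[Z_{n,\beta,h,\omega}^{\gamma}\big],
\]
so it suffices to bound $\mathbb E[Z_{n,\beta,h,\omega}^{\gamma}]$ uniformly in $n$.

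I would cut $\{1,\dots,n\}$ into consecutive blocks of a large length $m=m(\beta)$ and use the Markov renewal representation of $Z_{n,\beta,h,\omega}$ from \cite{Poisat_frc}, in which each stretch is grouped with the finitely many correlated coordinates of $\omega$ near its endpoints. Decomposing over the set of blocks actually visited by the Markov renewal, raising to the power $\gamma$ and using subadditivity of $x\mapsto x^{\gamma}$ on $[0,\infty)$ ($\gamma\in(0,1)$), one gets a renewal-type bound
\[
\mathbb E\big[Z_{n,\beta,h,\omega}^{\gamma}\big]\ \le\ \sum_{\ell\ge1}\ \sum_{0=j_0<j_1<\dots<j_\ell=n/m}\ \prod_{r=1}^{\ell}A_{j_r-j_{r-1}},
\]
where $A_k$ is the product of the tail weight $\asymp(km)^{-\alpha\gamma}$ attached to a stretch that jumps over $k-1$ unvisited blocks --- summable in $k$ precisely because $\gamma\alpha>1$ --- and the $\gamma$-th moment of a single \emph{visited} block partition function over $m$ sites. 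The structural point is that only visited blocks carry a change of measure, so its cost does not compound along the chain; the whole sum is then finite, giving $\mathbb E[Z_{n,\beta,h,\omega}^{\gamma}]=O(1)$ and hence $F(\beta,h)=0$, as soon as $\sum_kA_k<1$.

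On a visited block I would apply a Girsanov shift $\tilde{\mathbb P}$ of the Gaussian field by a small negative constant $-\delta$. By H\"older's inequality the $\gamma$-th moment of the block partition function is at most $(\mathrm{cost})^{1-\gamma}(\tilde{\mathbb E}[\mathrm{block}])^{\gamma}$ with $\mathrm{cost}=\tilde{\mathbb E}\big[(\mathrm d\mathbb P/\mathrm d\tilde{\mathbb P})^{1/(1-\gamma)}\big]=\exp\big(\tfrac{\gamma}{2(1-\gamma)}\,\delta^{2}\,\mathbf{1}^{\!\top}\Gamma_m^{-1}\mathbf{1}\big)$, where $\Gamma_m$ is the covariance matrix of $(\omega_1,\dots,\omega_m)$; since $\Gamma_m$ is banded Toeplitz, standard estimates on its inverse give $\mathbf{1}^{\!\top}\Gamma_m^{-1}\mathbf{1}\le\kappa m$ with $\kappa$ independent of $m$ --- this is the place where finite range of the correlations is used --- so $\mathrm{cost}\le\exp(C_\gamma\delta^{2}m)$. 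On the other hand the shift turns $\tilde{\mathbb E}[\mathrm{block}]$ into the \emph{annealed} block partition function at external field $h-\beta\delta$, controlled by the annealed Markov renewal kernel and its Perron--Frobenius eigenvalue $\lambda(\beta)$ (Theorem \ref{annealed}) together with Proposition \ref{ann_expo}. I would then take $\delta\asymp\beta$, so that $h-\beta\delta=h_c^a(\beta)-b\beta^{2}$ lies strictly inside the annealed delocalized phase, and $m$ equal to a large constant times the scale at which the annealed partition function at that field becomes small --- a negative power of $1/\beta$ up to slowly varying corrections, made explicit by Proposition \ref{ann_expo}. Because $\alpha>1$ this scale is $\ll\beta^{-2}$, whence $\mathrm{cost}=\exp(C_\gamma\delta^{2}m)$ stays bounded while $(\tilde{\mathbb E}[\mathrm{block}])^{\gamma}$ is a small negative power of the multiplicative constant times a positive power of $\beta$; choosing $\gamma$, then the constant, then $\beta$ suitably makes $\sum_kA_k<1$ for all small $\beta$. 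The extension to all $\beta\le1$, and the case $\beta>1$ of the ``moreover'', follow from the fixed-$\beta$ version of this argument together with continuity (hence local boundedness away from $0$) of $\beta\mapsto h_c(\beta)-h_c^a(\beta)$.

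The step I expect to be hardest is the coordination of the two scales in the last paragraph: one needs sharp and --- crucially --- \emph{uniform in $m$ and in $\beta\le1$} control of the sub-exponential behaviour of the annealed block partition function near the annealed critical point, in particular of the blow-up of its prefactor as $h-\beta\delta\uparrow h_c^a(\beta)$, which is exactly where the Markov renewal and Perron--Frobenius estimates of \cite{Poisat_frc}, refined through Proposition \ref{ann_expo}, have to be pushed. A secondary technical point is to set up the coarse-grained decomposition so that unvisited blocks contribute only the renewal tail weight and the change-of-measure costs of the visited blocks genuinely do not accumulate, and to check that the constant $\kappa$ in $\mathbf{1}^{\!\top}\Gamma_m^{-1}\mathbf{1}\le\kappa m$ is truly $m$-independent, which once more rests on the finite range of the correlations.
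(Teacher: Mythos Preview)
Your plan---fractional moments plus a Gaussian tilt of the environment, in the spirit of \cite{Derrida_al_relevance}---is the strategy the paper follows, and you correctly identify uniformity in $\beta$ as the crux. Two points of your implementation differ from the paper's in ways that would need attention.

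\emph{Factorisation under $\mathbb{P}$.} You propose cutting into consecutive blocks of fixed length $m$ and invoking ``the Markov renewal representation'' to handle the correlations, but you do not explain how the block contributions become independent under $\mathbb{P}$: adjacent blocks share $q$ correlated $\omega$'s at each interface, so after the $\gamma$-subadditivity step one cannot simply factor the expectation. The paper does \emph{not} use fixed blocks. It introduces the random subset $\hat\tau\subset\tau$ of renewal points preceded by a stretch strictly longer than $q$, decomposes the partition function along $\hat\tau$ (equation~(\ref{dec})), and uses that the resulting factors are functions of disjoint---hence independent---segments of the Gaussian field. This is the replacement for the i.i.d.\ factorisation, and it also feeds into the kernel estimate (Lemma~\ref{hatK}).

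\emph{Scale and shift.} The paper works at the single scale $k=1/(a\beta^2)$ with tilt $\lambda=1/\sqrt{j}$ for $j$ near $k$, so that the H\"older cost is the absolute constant $\exp\big(c\gamma/(1-\gamma)\big)$ (Lemma~\ref{holder2}), while the tilted block is bounded by $E_\beta\big(\exp(-c'\,a^{-1/2}\,|\tau\cap\{1,\dots,k\}|/k)\big)$ and made small by sending the free parameter $a\downarrow 0$. Uniformity in $\beta\le\beta_0$ is then Lemma~\ref{lem_lim2}, a law-of-large-numbers statement for the $\beta$-dependent Markov renewal proved via a coupling and the expansion $\tilde Q_\beta^*=\tilde Q_0^*+O(\beta^2)$; this is the genuinely new step over the i.i.d.\ proof and is what yields a single $a>0$ for all $\beta\le 1$ without any continuity patching. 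Your alternative---fix $\delta\asymp\beta$ and choose $m$ as the scale at which the annealed partition function at $h_c^a(\beta)-b\beta^2$ becomes small---cites Proposition~\ref{ann_expo}, but that proposition concerns the \emph{localized} side (where for $\alpha>1$ the correlation length is $\asymp\beta^{-2}$, not smaller) and says nothing about a delocalized-phase scale; a separate defective-renewal estimate would be needed there, and its interplay with the coarse-grained tail sum and with the $\beta$-dependence of $P_\beta$ would still have to be worked out.
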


To put it simply, we need to adapt the proofs to the world of Markov renewal processes. At some places, the fact that the underlying Markov renewal law at the annealed critical point depends on $\beta$ requires extra work. Theorem \ref{relevance1}, which was proved in \cite{1157.60090} in the i.i.d case relies on a fractional moment estimate technique. In a few words, this consists in bounding from above fractional moments of the quenched partition functions to prove that the free energy is null for some values of the parameters. In the i.i.d case, an explicit value of $\beta_0$ can be given: $\beta_0 = \inf\{\beta\geq0 : \frac{\beta^2}{2} - h(K)>0\} = \sqrt{2h(K)}$, where $h(K) = -\sum_{n\geq1}K(n)\log K(n)$ is the entropy of $K(\cdot)$. Our value of $\beta_0$ is not explicit, but it can still be implicitly defined as the first $\beta$ for which the difference between an energetic term and an entropic term becomes positive (see proof of Theorem \ref{relevance1}). Theorems \ref{relevance2} and \ref{relevance3} were proved in \cite{Derrida_al_relevance} in the case of i.i.d. disorder. The result given there is more complete when $\alpha \in (1/2,1)$, since it was proved in this case that for all $\epsilon>0$, there exists $a(\epsilon)>0$ such that
\begin{equation}\label{shift_h_c}
h_c(\beta)\geq h_c^a(\beta) + a(\epsilon)\beta^{\frac{2\alpha}{2\alpha-1}+\epsilon}
\end{equation}
for all $\beta\leq 1$ (see Remark \ref{rmk2}).
\begin{rmk}
 We claim that these results should hold without the Gaussian assumption (as in the i.i.d. setup of \cite{Derrida_al_relevance}), namely for disorder sequences of form
 $\omega_n = a_0\varepsilon_n + a_1\varepsilon_{n-1} + \ldots + a_q \varepsilon_{n-q}$, for all $n\geq0$ and a positive integer $q$, where the $a_i$'s are real numbers and $(\varepsilon_n)_{n\in\mathbb{Z}}$ is a sequence of i.i.d. random variables with finite exponential moments. One can check that this kind of disorder still satisfies the finite range correlations assumption and allows us to compute the exponential moments (with respect to disorder)  of $\sum_{k=1}^n \omega_k \delta_k$ (by rewriting it as a sum of the $\varepsilon_k$'s). Therefore, the methods used in \cite{Poisat_frc} should be adaptable, as well as the proofs of this paper. However, the expressions that one gets can be quite involved so we choose not to develop this point here.
\end{rmk}

\section{Definitions and notations}

We need to remind some definitions and notations from \cite{Poisat_frc}. They are necessary for the Markov renewal construction mentionned beforehand, and we will need them in our proofs. First we define the following mapping: $t\in\mathbb{N}^* \mapsto t^*\in E:= \{1,\ldots,q,\star\}$ with $t^* = t$ if $1\leq t \leq q$ and $t^* = \star$ otherwise. Loosely speaking, $\star$ is an abstract state refering to interarrival times greater than $q$, and it obeys the following rule: for all $z\in E$, $z+\star = \star + z = \star$. Vectors of $q$ consecutive interarrival times (resp. elements of $E$) will usually be denoted by $\overline{t} = (t_1, \ldots, t_q)$ (resp $x=(x_1,\ldots,x_q)$). If $(t_n)_{n\geq1}$ is a sequence of interarrival times, then we use the notation $\overline{t}_n = (t_n, \ldots, t_{n+q-1})$ and $\overline{t}^*_n = (t^*_n, \ldots, t^*_{n+q-1})$. We also define the consistency condition: $\overline{s} \rightsquigarrow \overline{t}$ (resp. $\overline{s}^* \rightsquigarrow \overline{t}^*$) if for all $i\in\{2,\ldots,q\}$, $s_i = t_{i-1}$ (resp. $s_i^* = t_{i-1}^*$).

A function $G$ is defined on $(\mathbb{N}^*)^q$ by $G(\overline{t}) = \rho_{t_1} + \rho_{t_1+t_2} + \ldots + \rho_{t_1+\ldots + t_q}$, but since $G(\overline{s}) = G(\overline{t})$ as soon as $\overline{s}^* = \overline{t}^*$, we can as well define it on $E^q$ by
\begin{equation*}
 G(x) = \rho_{x_1} + \rho_{x_1+x_2} + \ldots + \rho_{x_1+\ldots +x_q}
\end{equation*}
if we agree that $\rho_{\star} = 0$. This reduction to a finite state space is helpful for the resolution of the annealed model. Indeed, we can make the following transfer matrix appear
\begin{equation*}
Q^*_{\beta}(x,y) = e^{\beta^2 G(y)}K(y_q) \mathbf{1}_{\{x \rightsquigarrow y\}}.
\end{equation*}
where $K(\star):= \sum_{n>q} K(n)$, and we define 
\begin{equation}\label{lambda}
\lambda(\beta) = \hbox{ Perron-Frobenius eigenvalue of } Q^*_{\beta}
\end{equation}
which is the quantity appearing in Theorem \ref{annealed}. To solve the annealed model, a law $P_{\beta}$ is introduced in \cite{Poisat_frc}, that we recall here. Let $r^*_{\beta} = (r^*_{\beta}(x))_{x\in E^q}$ be a positive right eigenvector of $Q^*_{\beta}$ associated to $\lambda(\beta)$. Define for all $\overline{t}$ in $(\mathbb{N}^*)^q$, $r_{\beta}(\overline{t}) = r^*_{\beta}(\overline{t}^*)$ and $Q_{\beta}$ the infinite matrix $Q_{\beta}(\overline{s},\overline{t})= e^{\beta^2 G(\overline{t})}K(t_q)\mathbf{1}_{\{\overline{s}\rightsquigarrow\overline{t}\}}$. Then the matrices $\tilde{Q}_{\beta}$ and $\tilde{Q}^*_{\beta}$ respectively defined by \begin{equation*}\tilde{Q}_{\beta}(\overline{s},\overline{t}) := \frac{Q_{\beta}(\overline{s},\overline{t})r_{\beta}(\overline{t})}{\lambda(\beta)r_{\beta}(\overline{s})}\end{equation*} and \begin{equation} \label{tildeQstar}\tilde{Q}^*_{\beta}(\overline{s}^*,\overline{t}^*) := \frac{Q_{\beta}^*(\overline{s}^*,\overline{t}^*)r_{\beta}^*(\overline{t}^*)}{\lambda(\beta)r_{\beta}^*(\overline{s}^*)}\end{equation} are Markov transition kernels (resp. on $(\mathbb{N}^*)^q$ and $E^q$), see \cite[Lemma 4.1]{Poisat_frc}. The law $P_{\beta}$ is then defined on the interarrival times $(T_n)_{n\geq1}$ by $$P_{\beta}(T_1 = t_1, \ldots, T_q = t_q) = \prod_{k=1}^q K(t_k)$$ and for all $k\geq0$, $$P_{\beta}(T_{k+q+1} = t_{q+1} | T_{k+1} = t_1, \ldots, T_{k+q} = t_q) = \tilde{Q}_{\beta}(\overline{t}_1,\overline{t}_2).$$ Then one remarks (\cite[Section 4.4]{Poisat_frc}) that under $P_{\beta}$, $(\tau_n)_{n\geq0}$ is a delayed Markov renewal process with  modulating Markov chain $(\overline{T}_{k-q}^*)_{k\geq q+1}$, and with the following semi-Markov kernel: for all $n\geq1$, $x,y\in E^q$,
$$P_{\beta}(T_{k+q+1}=n, \overline{T}_{k+2}^*=y | \overline{T}_{k+1}^*=x) = \tilde{Q}_{\beta}^*(x,y) \frac{K(n)}{K(y_q)}\mathbf{1}_{\{n^* = y_q\}}.$$ We define $E_{\beta}$ as the expectation with respect to $P_{\beta}$.

\section{Proof of Theorem \ref{relevance1}}

In this section we prove Theorem \ref{relevance1}. For all $\gamma$ in $(\frac{1}{1+\alpha},1]$, for all $x$,$y$ in $E^q$ we define
\begin{equation*}
 \hat{Q}^*_{\beta,\gamma}(x,y) = \left\{ \begin{array}{ccc} \frac{K(y_q)^{\gamma}}{\lambda(\beta)^{\gamma}} \exp\left\{\frac{\beta^2}{2}\gamma(\gamma-1) + \gamma^2\beta^2 G(y) \right\} \mathbf{1}_{\{x \rightsquigarrow y\}}& \hbox{if} & y_q \neq \star,\\ \frac{\sum_{n>q}K(n)^{\gamma}}{\lambda(\beta)^{\gamma}} \exp\left\{\frac{\beta^2}{2}\gamma(\gamma-1) + \gamma^2\beta^2 G(y) \right\} \mathbf{1}_{\{x \rightsquigarrow y\}} & \hbox{if} & y_q = \star .
                          
                         \end{array} \right.
\end{equation*}
The condition $\alpha>0$ ensures that $(\frac{1}{1+\alpha},1]$ is nonempty. We denote by $\Lambda(\beta,\gamma)$ the Perron-Frobenius eigenvalue of $\hat{Q}_{\beta,\gamma}^*$. We will use the following lemma:
\begin{lem}\label{lemma_1}
 If $\Lambda(\beta,\gamma)<1$ then there exists $\delta>0$ such that
 \begin{equation*}
  \lim _{N\rightarrow+\infty} \frac{1}{N}\log \mathbb{E}Z_{N,\beta,h_c^a(\beta)+\delta}^{\gamma} = 0.
 \end{equation*}
\end{lem}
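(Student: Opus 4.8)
The plan is to use the fractional moment method combined with a change of measure to the Markov renewal law at the annealed critical point. First I would fix $\gamma\in(\tfrac{1}{1+\alpha},1]$ and write the quenched partition function at $h=h_c^a(\beta)+\delta$ as a sum over renewal configurations, $Z_{N,\beta,h_c^a(\beta)+\delta,\omega}=E\big(e^{\sum_{k=1}^N(\beta\omega_k+h_c^a(\beta)+\delta)\delta_k}\delta_N\big)$. Using the elementary inequality $(\sum a_i)^\gamma\le\sum a_i^\gamma$ for $\gamma\in(0,1]$, one gets $\mathbb{E}Z_{N,\beta,h_c^a(\beta)+\delta}^\gamma\le\sum_{\ell\ge1}\sum_{0=n_0<n_1<\dots<n_\ell=N}\prod_{i=1}^\ell K(n_i-n_{i-1})^\gamma\,e^{\gamma(h_c^a(\beta)+\delta)\ell}\,\mathbb{E}\big[e^{\gamma\beta\sum_{i=1}^\ell\omega_{n_i}}\big]$, where the disorder expectation factors out because $\omega$ is independent of $\tau$. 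The Gaussian computation of the exponential moment is exactly the one performed in \cite{Poisat_frc} for the annealed model: $\mathbb{E}[e^{\gamma\beta\sum_i\omega_{n_i}}]=\exp\big(\tfrac{\gamma^2\beta^2}{2}(\ell+2\sum_{i<j}\rho_{n_j-n_i})\big)$, and since $\rho$ vanishes beyond range $q$ this collapses to a product of factors depending only on blocks of $q$ consecutive increments, i.e. on the $G(\overline t)$ quantities.

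Next I would reorganize this bound into a Markov renewal sum over the finite state space $E^q$. Plugging $h_c^a(\beta)=-\tfrac{\beta^2}{2}-\log\lambda(\beta)$ (Theorem \ref{annealed}) into $e^{\gamma h_c^a(\beta)\ell}$ produces the factors $\lambda(\beta)^{-\gamma}$ and $e^{\tfrac{\beta^2}{2}\gamma(\gamma-1)}$ per step, and combining with $K(\cdot)^\gamma$ and $e^{\gamma^2\beta^2 G(\cdot)}$ one recognizes precisely the matrix entries $\hat Q^*_{\beta,\gamma}(x,y)$ defined just above the lemma (the two cases $y_q\ne\star$ and $y_q=\star$ coming from whether the relevant increment is $\le q$ or $>q$, with $\sum_{n>q}K(n)^\gamma$ replacing $K(n)^\gamma$ in the latter). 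Hence, up to boundary terms that are harmless at the exponential scale, $\mathbb{E}Z_{N,\beta,h_c^a(\beta)+\delta}^\gamma\le C\,e^{\gamma\delta\,(\text{number of renewals})}\times\big(\text{sum of products of }\hat Q^*_{\beta,\gamma}\text{-entries along paths of length }N\big)$. By Perron--Frobenius theory applied to the (finite, irreducible) matrix $\hat Q^*_{\beta,\gamma}$, such a sum grows like $\Lambda(\beta,\gamma)^{\imath_N}$ in the relevant regime; more carefully, after the change of measure one writes the bound as $E_{\beta,\gamma}\big[\big(\text{something}\big)\big]$ against a renewal law built from $\hat Q^*_{\beta,\gamma}/\Lambda(\beta,\gamma)$, and the residual factor is $\big(\Lambda(\beta,\gamma)e^{\gamma\delta}\big)^{\imath_N}$ up to slowly varying corrections.

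Finally, since $\Lambda(\beta,\gamma)<1$ by hypothesis, one can choose $\delta>0$ small enough that $\Lambda(\beta,\gamma)e^{\gamma\delta}<1$; then the number of renewals $\imath_N$ is typically of order $N^\alpha$ (or of order $N$ if $\alpha\ge1$, or of order $N/\log N$ at $\alpha=1$), so $\mathbb{E}Z_{N,\beta,h_c^a(\beta)+\delta}^\gamma$ is bounded by a constant (indeed it is $o(1)$, but boundedness suffices). Taking $\tfrac1N\log$ and letting $N\to\infty$ gives $\lim_N\tfrac1N\log\mathbb{E}Z_{N,\beta,h_c^a(\beta)+\delta}^\gamma\le0$; the reverse inequality $\ge0$ is immediate by restricting the renewal expectation to the single trajectory $\{\tau_1=N\}$ together with $K(N)^\gamma=N^{-\gamma(1+\alpha)+o(1)}$ and $\gamma(1+\alpha)>1$... wait, that gives a negative exponent, so in fact one argues instead that $\mathbb{E}Z_N^\gamma\ge(\mathbb{E}Z_N)^\gamma\cdot(\text{const})$ is false too; rather, one simply notes $\mathbb{E}Z_{N,\beta,h}^\gamma\ge$ the contribution of $\{N\in\tau\}$-trajectories which is bounded below in the same Perron--Frobenius fashion by $\Lambda(\beta,\gamma)^{\imath_N}$ times a polynomial, forcing the limit to be exactly $0$ rather than $-\infty$. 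The main obstacle I anticipate is the bookkeeping in the second step: correctly matching the Gaussian exponential-moment factors to the $\hat Q^*_{\beta,\gamma}$ entries across block boundaries (the first $q$ increments, the $\star$-state reduction, and the delayed-renewal initial condition), so that the Perron--Frobenius eigenvalue $\Lambda(\beta,\gamma)$ genuinely controls the growth; the change of measure itself and the $\Lambda<1\Rightarrow$ boundedness argument are then routine.
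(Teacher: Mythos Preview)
Your approach is the paper's: decompose, apply $(\sum a_i)^\gamma\le\sum a_i^\gamma$, compute the Gaussian moment, regroup into products of $\hat Q^*_{\beta,\gamma}$-entries, and invoke Perron--Frobenius. Two places where your execution wobbles are worth straightening out.

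First, the detour through ``$\imath_N$ is typically of order $N^\alpha$'' is unnecessary and not the right argument. After conjugating by a positive right eigenvector $r^*$ of $\hat Q^*_{\beta,\gamma}$, the kernel $(\overline s,\overline t)\mapsto e^{\gamma\delta}\hat Q_{\beta,\gamma}(\overline s,\overline t)\,r(\overline t)/r(\overline s)$ sums over $\overline t$ to $e^{\gamma\delta}\Lambda(\beta,\gamma)\le1$ for $\delta$ small. This defines a \emph{defective} renewal law $\hat P$, and the upper bound is simply $\mathbb{E}Z_N^\gamma\le C\,\hat P(N\in\tau)\le C$: the factor $(\Lambda e^{\gamma\delta})^{\imath_N}$ is $\le1$ deterministically, so no information on the typical size of $\imath_N$ is used. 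This is exactly what the paper does.

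Second, your first instinct for the lower bound was correct and is the paper's argument: restricting to $\{\tau_1=N\}$ gives $\mathbb{E}Z_N^\gamma\ge C'K(N)^\gamma\asymp N^{-\gamma(1+\alpha)}$, a polynomial decay, hence $\tfrac1N\log(\cdot)\to0$. There is no ``negative exponent'' issue---you are taking $\tfrac1N\log$ of a polynomially small quantity, which tends to $0$---and the subsequent attempts to replace this step are not needed.
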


\begin{proof}[Proof of Lemma \ref{lemma_1}]
 We start by decomposing the partition function. For every real number $h$, we have
\begin{equation*}
Z_{N,\beta,h} = \sum_{n=1}^N \sum_{\substack{t_1,\ldots,t_n \geq 1\\ t_1+\ldots+t_n=N}}\exp(\sum_{i=1}^n (\beta\omega_{t_1+\ldots+t_i}+h))\prod_{i=1}^n K(t_i).
\end{equation*}
For all $\gamma\in(0,1)$ and nonnegative $(a_i)_{1\leq i\leq n}$, we have 
 \begin{equation}\label{fractrick}
 (a_1 +\ldots + a_n)^{\gamma}\leq a_1^{\gamma}+\ldots +a_n^{\gamma},
 \end{equation} hence
 \begin{equation}\label{rev6_1}
   Z_{N,\beta,h}^{\gamma} \leq \sum_{n=1}^N \sum_{\substack{t_1,\ldots,t_n \geq 1\\ t_1+\ldots+t_n=N}}\exp(\gamma \sum_{i=1}^n (\beta\omega_{t_1+\ldots+t_i}+h))\prod_{i=1}^n K(t_i)^{\gamma}.
 \end{equation}
 From our assumptions on the correlations of $\omega$,
 \begin{align*}
 \var\left( \sum_{i=1}^n \omega_{t_1+\ldots +t_i}\right) &= n + 2 \sum_{i=2}^n (\rho_{t_i} + \rho_{t_i+t_{i+1}} + \ldots + \rho_{t_i + \ldots + t_n}) \\
 &\leq n + 2 \sum_{i=1}^n G(\overline{t}_i) + c
 \end{align*}
 where $c=(q+1)(|\rho_1|+\ldots+|\rho_q|)$, and for every possible value for $t_{n+1},\ldots,t_{n+q}$. The inequality comes by bounding from above boundary effects ($i=1$ and $n-q\leq i \leq n$). Since $\omega$ is Gaussian, we have
 \begin{equation}\label{rev6_2}
 \mathbb{E}\left( \exp\left(\beta\gamma \sum_{i=1}^n \omega_{t_1+\ldots +t_i} \right) \right)\leq C(\beta) \exp\left( \frac{\gamma^2 \beta^2 n}{2} + \gamma^2 \beta^2 \sum_{i=1}^n G(\overline{t}_i) \right).
 \end{equation}
 where $C(\beta)$ is a constant. From (\ref{rev6_1}) and (\ref{rev6_2}), by choosing $$h = h_c^a(\beta) + \delta = -\frac{\beta^2}{2} - \log \lambda(\beta) + \delta,$$ we get
\begin{equation}\label{decompZgamma}
 \mathbb{E}(Z_{N,\beta,h_c^a(\beta)+\delta}^{\gamma}) \leq C(\beta,\delta) \sum_{n=1}^N \sum_{\substack{\overline{t}_1,\ldots,\overline{t}_n \\ t_1 +\ldots+t_n=N}} K(t_1)^{\gamma}\ldots K(t_q)^{\gamma}\prod_{k=1}^{n-1} e^{\delta}\hat{Q}_{\beta,\gamma}(\overline{t}_i,\overline{t}_{i+1})
 \end{equation}
 where 
 \begin{equation*}
  \hat{Q}_{\beta,\gamma}(\overline{s},\overline{t}) :=  \frac{K(t_q)^{\gamma}}{\lambda(\beta)^{\gamma}} \exp\left\{\frac{\beta^2}{2}\gamma(\gamma-1) + \gamma^2\beta^2 G(\overline{t}) \right\} \mathbf{1}_{\{\overline{s} \rightsquigarrow \overline{t}\}}.
 \end{equation*}
Now we take $\gamma$ in $(\frac{1}{1+\alpha},1]$. Let $\beta$ and $\gamma$ be such that $\Lambda(\beta,\gamma)<1$. Let $r^*$  be a positive right eigenvector of $\hat{Q}^*_{\beta,\gamma}$, associated to $\Lambda(\beta,\gamma)$. Define  $r$ on $(\mathbb{N}^*)^q$ by $r(\overline{s}) = r^*(\overline{s}^*)$. Then one can observe that $$\sum_{\overline{t}\in(\mathbb{N}^*)^q} \hat{Q}_{\beta,\gamma}(\overline{s},\overline{t})r(\overline{t}) = \Lambda(\beta,\gamma) r(\overline{s}).$$
As a consequence, for all $\overline{s}$ and for $\delta>0$ small enough, \begin{equation*}\sum_{\overline{t}} e^{\delta} \hat{Q}_{\beta,\gamma}(\overline{s},\overline{t})\frac{r(\overline{t})}{r(\overline{s})} = e^{\delta}\Lambda(\beta,\gamma)  \leq 1.\end{equation*}
This allows us to define a process with the following kernel: for all $k\geq0$
 \begin{equation}\label{defPhat}
  \hat{P}(T_{k+q+1}=t_{q+1} | T_{k+1}= t_1, \ldots, T_{k+q} = t_q) = \hat{Q}_{\beta,\gamma}(\overline{t}_1,\overline{t}_2)\frac{r(\overline{t}_2)}{r(\overline{t}_1)}e^{\delta}
 \end{equation}
 and the possibly positive probability
 \begin{equation*}
  \hat{P}(T_{k+q+1}=+\infty | \overline{T}_k = \overline{s}) = 1 -  e^{\delta}\Lambda(\beta,\gamma),
 \end{equation*}
 with the initial conditions $\hat{P}(T_1 = t_1,\ldots ,T_q = t_q ) = \frac{1}{c(\gamma)^q}K(t_1)^{\gamma}\ldots K(t_q)^{\gamma}$ where $c(\gamma) = \sum_{n\geq 1}K(n)^{\gamma}$. Notice that (\ref{defPhat}) tells how to sample an interarrival time conditionally to the past, only if previous interarrival times are finite. As soon as an interarrival time is infinite, all coming interarrival times coming after are defined as $+\infty$. Therefore, we may write \begin{equation*}\mathbb{E}(Z_{N,\beta,h_c^a(\beta)+\delta}^{\gamma})  \leq c(\gamma)^q C(\beta,\delta) \max_{x,y\in E^q}\{r^*(y)/r^*(x)\} \hat{P}(N\in\tau)\end{equation*} and since $\mathbb{E}(Z_{N,\beta,h_c^a(\beta)+\delta}^{\gamma}) \geq C' K(N)^{\gamma}$ (by restricting the partition function to the event $\{\tau_1=N\}$), we get the result.
\end{proof}

\begin{proof}[Proof of Theorem \ref{relevance1}]
 Suppose that $\beta$ is such that there exists $\gamma$ in $(\frac{1}{1+\alpha},1)$ satisfying the condition of Lemma \ref{lemma_1}. Then, for $\delta>0$ small enough,
 \begin{align*}
  \frac{1}{n}\mathbb{E}\log Z_{n,\beta,h_c^a(\beta)+\delta} = \frac{1}{\gamma n} \mathbb{E}\log Z_{n,\beta,h_c^a(\beta)+\delta}^{\gamma} &\stackrel{\hbox{(Jensen)}}{\leq} \frac{1}{\gamma n} \log \mathbb{E} Z_{n,\beta,h_c^a(\beta)+\delta}^{\gamma}\\ &\stackrel{\hbox{(Lemma \ref{lemma_1})}}{\longrightarrow_{n\rightarrow+\infty}} 0,
 \end{align*}
which implies $F(\beta, h_c^a(\beta)+\delta)=0$, that is $h_c(\beta) > h_c^a(\beta)$. Therefore, it is sufficient to prove, since $\Lambda(\beta,1)=1$, that for $\beta$ large enough,
\begin{equation*}
 \partial_{\gamma}\Lambda(\beta,\gamma)\arrowvert_{\gamma=1^-} > 0.
\end{equation*}
The first step is to compute $\partial_{\gamma}\hat{Q}^*_{\beta,\gamma}\arrowvert_{\gamma=1^-}$. Straightforward computations yield (we write $\hat{Q}^*_{\beta}$ instead of $\hat{Q}^*_{\beta,1}$)
\begin{equation}\label{derivative1}
 \partial_{\gamma}\hat{Q}^*_{\beta,\gamma}\arrowvert_{\gamma=1^-}(x,y) =  \left(\frac{\beta^2}{2} - \log\lambda(\beta) + 2\beta^2G(y) + \log K(y_q) \right) \hat{Q}^*_{\beta}(x,y)
\end{equation}
if $y_q \neq \star$; and if $y_q = \star$,
\begin{align}\label{derivative2}
  \partial_{\gamma}\hat{Q}^*_{\beta,\gamma}\arrowvert_{\gamma=1^-}(x,y) =  &\left(\frac{\beta^2}{2} - \log\lambda(\beta) + 2\beta^2G(y) + \log K(\star) \right) \hat{Q}^*_{\beta}(x,y) \\&+ \left(\sum_{n>q} \frac{K(n)}{K(\star)} \log \frac{K(n)}{K(\star)}\right) \hat{Q}^*_{\beta}(x,y).
\end{align}
Let us denote by $l^*_{\beta}$ (resp. $r^*_{\beta}$) a left row (resp. right column) eigenvector of $\hat{Q}^*_{\beta}$ associated to 1, with positive coordinates, normalized such that
\begin{equation*}
 l^*_{\beta}\cdot r^*_{\beta} = 1.
\end{equation*}
Then (see \cite[A.8]{GG_Book} for instance) we have
\begin{equation}\label{derivative3}
 \partial_{\gamma}\Lambda(\beta,\gamma)_{\arrowvert_{\gamma=1^-}} = l^*_{\beta}\cdot \partial_{\gamma}\hat{Q}^*_{\beta,\gamma}\arrowvert_{\gamma=1^-}r^*_{\beta}.
\end{equation}
We denote by $\pi_{\beta}$ the probability on $E^q$ defined by 
\begin{equation*}
 \pi_{\beta}(x) = l^*_{\beta}(x) r^*_{\beta}(x).
\end{equation*}
This probability is in fact the invariant probability of the Markov chain on $E^q$ with transition kernel $\tilde{Q}_{\beta}^*$ defined in (\ref{tildeQstar}). Indeed, for all $y$,
\begin{align*}
\sum_{x} \pi_{\beta}(x)\tilde{Q}_{\beta}^*(x,y) = \sum_{x} l_{\beta}^*(x)r_{\beta}^*(x)\tilde{Q}^*_{\beta}(x,y) &= \sum_{x} l^*_{\beta}(x)\hat{Q}_{\beta}^*(x,y)r^*_{\beta}(y) \\&= l^*_{\beta}(y)r^*_{\beta}(y) \\&=\pi_{\beta}(y).
\end{align*}
In the sequel, $X^{(n)} = (X^{(n)}_1,\ldots, X^{(n)}_q)$, $n\geq0$, will refer to a Markov chain on $E^q$ with kernel $\tilde{Q}^*_{\beta}$ and initial law $\pi_{\beta}$. Its law will be denoted by $P_{\pi_{\beta}}$ and $E_{\pi_{\beta}}$ will be the expectation with respect to $P_{\pi_{\beta}}$. Putting (\ref{derivative1}) and (\ref{derivative2}) in (\ref{derivative3}), and using stationarity of $\pi_{\beta}$, we get
\begin{align}\label{sum2}
 \partial_{\gamma}\Lambda(\beta,\gamma)_{\arrowvert_{\gamma=1^-}} &= \frac{\beta^2}{2} -\log\lambda(\beta) + 2\beta^2 E_{\pi_{\beta}}(G(X^{(0)})) + E_{\pi_{\beta}}(\log K(X^{(0)}_q))\nonumber \\&+ P_{\pi_{\beta}}(X^{(0)}_q = \star) \left(\sum_{n>q} \frac{K(n)}{K(\star)} \log \frac{K(n)}{K(\star)}\right)
\end{align}
Analyzing the behaviour of $\lambda(\beta)$ and $\pi_{\beta}$ for large values of $\beta$ is not a trivial task, because it depends on the maxima of the function $G$ (see for instance \cite{MR1659185} and references therein on this topic). We will rather transform the last expression so that the proof does not rely on the large $\beta$ analysis  of these quantities. The sum in (\ref{sum2}) can be reinterpreted as a sum of energy and entropy terms : the term $\sum_{n>q} \frac{K(n)}{K(\star)} \log \frac{K(n)}{K(\star)}$ is the opposite of the entropy of the kernel $K_q(n) := \frac{K(n)}{K(\star)}\mathbf{1}_{\{n>q\}}$, we denote by $h(K_q)$. The specific entropy $h(\tilde{Q}^*_{\beta})$ of the stationary Markov chain $(X^{(n)})_{n\geq0}$ (see \cite[pp.59-63]{ShieldsBook}) can be rewritten as, using (\ref{tildeQstar}),
\begin{align*}
 -h(\tilde{Q}^*_{\beta}) &\stackrel{\hbox{(def)}}{=} E_{\pi_{\beta}}(\log \tilde{Q}^*_{\beta}(X^{(0)},X^{(1)}))\\
 & = \beta^2 E_{\pi_{\beta}}(G(X^{(1)})) + E_{\pi_{\beta}}(\log K(X^{(1)}_q))\\& - \log \lambda(\beta) + E_{\pi_{\beta}}(\log r^*_{\beta}(X^{(1)})) -E_{\pi_{\beta}}(\log r^*_{\beta}(X^{(0)}))\\  &\stackrel{\hbox{(stationarity)}}{=} \beta^2 E_{\pi_{\beta}}(G(X)) + E_{\pi_{\beta}}(\log K(X_q)) - \log \lambda(\beta).
\end{align*}
Therefore, we may write:
\begin{equation}\label{EQ}
 \partial_{\gamma}\Lambda(\beta,\gamma)_{\arrowvert_{\gamma=1^-}} = \frac{\beta^2}{2} + \beta^2 E_{\pi_{\beta}}(G(X)) - h(\tilde{Q}^*_{\beta}) - h(K_q)P_{\pi_{\beta}}(X_q = \star).
\end{equation}
Note that the entropy $h(K_q)$ is finite because $\alpha>0$. As the specific entropy of a process on the finite state space $E^q$, for all $\beta$, $h(\tilde{Q}^*_{\beta})$ is nonnegative and bounded above by $\log \card(E^q)$, so the last two terms of (\ref{EQ}) are bounded. We are now going to conclude the proof by showing that 
\begin{equation*}
 \frac{\beta^2}{2} + \beta^2 E_{\pi_{\beta}}(G(X)) \stackrel{\beta\rightarrow+\infty}{\longrightarrow} +\infty.
\end{equation*}
Let $h(\tilde{Q}^*_{\beta}|\tilde{Q}^*_0)$ be the specific relative entropy (see \cite{deuschel1989large} for instance) of the stationary Markov chain with transition matrix $\tilde{Q}^*_{\beta}$ with respect to the one with transition matrix $\tilde{Q}^*_0$, defined as the limit of $(1/n) h(\tilde{Q}^*_{\beta}\arrowvert_{\mathcal{F}_n}|\tilde{Q}^*_0\arrowvert_{\mathcal{F}_n})$, where $\mathcal{F}_n$ is the $\sigma$-algebra generated by the random variables $X^{(k)}$ for $0\leq k \leq n$. We have
\begin{align*}
 h(\tilde{Q}^*_{\beta}\arrowvert_{\mathcal{F}_n}|\tilde{Q}^*_0\arrowvert_{\mathcal{F}_n}) &= E_{\pi_{\beta}}\left( \log \left(\frac{\pi_{\beta}(X^{(0)}) \prod_{i=1}^n \tilde{Q}^*_{\beta}(X^{(i-1)},X^{(i)}}{\pi_0(X^{(0)}) \prod_{i=1}^n \tilde{Q}^*_0(X^{(i-1)},X^{(i)})} \right) \right)\\
 &= \beta^2\sum_{i=1}^n E_{\pi_{\beta}}(G(X^{(i)})) - n \log\lambda(\beta) + E_{\pi_{\beta}}(\log r^*_{\beta}(X^{(n)})) \\& - E_{\pi_{\beta}}(\log r^*_{\beta}(X^{(0)})) + E_{\pi_{\beta}}\left(\log\left(\frac{\pi_{\beta}(X^{(0)})}{\pi_0(X^{(0)})}\right)\right)\\
 & \stackrel{(\hbox{stationarity})}{=}  \beta^2\left(E_{\pi_{\beta}}(G(X^{(0)})) - \log\lambda(\beta)\right)n + h(\pi_{\beta}|\pi_{0})
\end{align*}
and so
\begin{equation*}
 h(\tilde{Q}^*_{\beta}|\tilde{Q}^*_0) = \beta^2 E_{\pi_{\beta}}(G(X^{(0)})) - \log\lambda(\beta),
\end{equation*}
which is a nonnegative quantity. Thus,
\begin{equation*}
 \frac{\beta^2}{2} + \beta^2 E_{\pi_{\beta}}(G(X)) \geq \frac{\beta^2}{2} + \log\lambda(\beta) = -h_c^a(\beta).
\end{equation*}
Since $h_c^a(\beta) \stackrel{\beta\rightarrow+\infty}{\longrightarrow} -\infty$ (because $h_c(0)=0$, $h^a_c(\beta)<0$ for some $\beta>0$ and it is concave in $\beta$), the proof is complete.
\end{proof}

\section{Proofs of Theorems \ref{relevance3} and \ref{relevance2}}

In this section we shall prove Theorem \ref{relevance3} and Theorem \ref{relevance2}. In a first part, we adapt the fractional moment technique developed in \cite{Derrida_al_relevance} to our case. It is a refinement of the fractional moment technique of the previous section from which we show that the free energy is null if a certain sum $\varrho$ depending on $\beta$,$h$,$\gamma$ and a scale $k$ is small (see Lemma \ref{lemma_rho} and (\ref{sum})). The way we make this quantity small depends whether $\alpha$ is greater than $1$ or between $1/2$ and $1$. In the sequel, the functions $L_i(\cdot)$ will refer to slowly varying functions.

\subsection{Fractional moments}

In the following, we take this definition of the Hamiltonian:
\[H_j(\beta,h,\omega,\tau) = \sum_{k=0}^{j-1} (\beta\omega_k + h)\delta_k,\]
which does not change the value of the limit free energy. We recursively define the following subset of $\tau$: $\hat{\tau}_0 = 0$ and for all $n\geq0$
\begin{equation*}
\hat{\tau}_{n+1} = \inf\{\tau_k>\hat{\tau}_n : \tau_k - \tau_{k-1} > q\},
\end{equation*}
i.e $\hat{\tau}$ is the subset of renewal points that come just after a stretch strictly larger than $q$. Let us also define the following partition functions:
\begin{align*}
 \hat{Z}_{j,\beta,h,\omega} &:= E\left( \exp(H_j(\beta,h,\omega)) \mathbf{1}_{\{\hat{\tau}\cap\{1\ldots j\} = \{j\}\}}\right), \\
  \check{Z}_{j,\beta,h,\omega} &:= E\left(\exp(H_j(\beta,h,\omega)) \delta_j \mathbf{1}_{\{\hat{\tau}\cap\{1\ldots j\}=\emptyset\}}\right),\\
   \tilde{Z}_{j,\beta,h,\omega} &:= E\left(\exp(H_j(\beta,h,\omega)) \mathbf{1}_{\{j\in\hat{\tau}\}}\right).
\end{align*}
In other words, $\hat{Z}_j$ is the partition function restricted to the event ``$j$ is a renewal point and the only stretch strictly larger than $q$ is the one just before $j$'', $\check{Z}_j$ the restriction to the event ``$j$ is a renewal point and all stretches before it are smaller than $q$'', and $\tilde{Z}_j$ the restriction to ``$j$ is a renewal point, the stretch just before it is strictly larger $q$''. 

Let $k$ be an integer that we shall specify later. We decompose $\tilde{Z_n}$ the following way: $l$ is the last element of $\hat{\tau}$ strictly before $k$ and $r$ is the first element of $\hat{\tau}$ greater or equal to $k$. This yields, by Markov property:
\begin{equation}\label{dec}
 \tilde{Z}_{n,\omega} = \sum_{0\leq l <k} \sum_{r=k}^n \tilde{Z}_{l,\omega} \hat{Z}_{r-l,\theta^l\omega} \tilde{Z}_{n-r,\theta^r\omega}, 
\end{equation}
if we agree that $\hat{Z}_j = \tilde{Z}_j = 0$ if $1\leq j \leq q$, and $\hat{Z}_0 = \tilde{Z}_0 = 1$. Observe that the three factors in the sum, seen as disorder functions, are independent because of the finite range assumption and our construction of $\hat{\tau}$. From (\ref{dec}) and (\ref{fractrick}) we deduce that for all $\gamma\in(0,1)$,
\begin{equation}\label{ineq1}
 \tilde{Z}_{n,\omega}^{\gamma} \leq \sum_{0\leq l <k} \sum_{r=k}^n \tilde{Z}_{l,\omega}^{\gamma} \hat{Z}_{r-l,\theta^l\omega}^{\gamma} \tilde{Z}_{n-r,\theta^r\omega}^{\gamma}
\end{equation}
and if we define the sequence $A_n = \mathbb{E}\tilde{Z}_{n,\omega}^{\gamma}$, we have by independence, for $n\geq k$,
\begin{equation}\label{ren}
 A_n \leq \sum_{l=0}^{k-1}\sum_{r=k}^n A_l \mathbb{E}(\hat{Z}_{r-l}^{\gamma}) A_{n-r}.
\end{equation}
Let
\begin{equation*}
 \hat{K}(j) = \hat{K}(j,\beta,h,\gamma) =\left\{ \begin{array}{ccc}
                                                  \mathbb{E}(\hat{Z}_{j,\beta,h,\omega}^{\gamma}) & \hbox{if} & j>q\\
                                                  0 & \hbox{if} & j\leq q.
                                                 \end{array} \right.
\end{equation*}
We have the following lemma:
\begin{lem}\label{lemma_rho}
 If $\beta$ and $h$ are such that there exists $k\geq1$ and $\gamma\in(0,1)$ for which
 \begin{equation}\label{cdt}
  \varrho(\beta,h,\gamma,k) := \sum_{r\geq k} \sum_{l=0}^{k-1} \hat{K}(r-l) A_l \leq 1
 \end{equation}
then $F(\beta,h)=0$.
\end{lem}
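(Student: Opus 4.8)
The plan is to show that the hypothesis $\varrho(\beta,h,\gamma,k)\le 1$ forces the sequence $A_n=\mathbb{E}\tilde Z_{n,\omega}^\gamma$ to stay bounded in $n$, and then to transfer this boundedness back to the quenched free energy via Jensen's inequality. The starting point is the renewal-type inequality (\ref{ren}): for $n\ge k$, $A_n\le\sum_{l=0}^{k-1}\sum_{r=k}^n A_l\,\hat K(r-l)\,A_{n-r}$. I would read the right-hand side as a (sub-)convolution: setting $B_m=\sum_{l=0}^{k-1}\hat K(m-l)A_l$ for $m\ge k$ (so that $\sum_{m\ge k}B_m=\varrho\le 1$), the inequality becomes $A_n\le\sum_{r=k}^n B_r\,A_{n-r}$, i.e. $A\le B*A$ on indices $\ge k$, with the "defective" mass condition $\sum B_r\le 1$.

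The core step is an induction on $n$ to prove $A_n\le M$ for all $n$, where $M:=\max_{0\le n<k}A_n$ (this maximum is finite since each $A_n$ is a finite sum of finite expectations — the Gaussian exponential moments are finite). For $n<k$ this is immediate. For $n\ge k$, assuming $A_m\le M$ for all $m<n$ (and noting $n-r\le n-k<n$ in every term since $r\ge k$), we get
\begin{equation*}
A_n\le\sum_{r=k}^n B_r\,A_{n-r}\le M\sum_{r=k}^n B_r\le M\sum_{r\ge k}B_r=M\,\varrho\le M.
\end{equation*}
Hence $\sup_n A_n\le M<\infty$. A small point to check is the base/boundary bookkeeping in (\ref{dec})–(\ref{ren}) (the conventions $\hat Z_0=\tilde Z_0=1$, $\hat Z_j=\tilde Z_j=0$ for $1\le j\le q$), so that the recursion genuinely only involves strictly smaller indices on the right; this is what makes the induction close.

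To conclude, I would relate $Z_{n,\beta,h,\omega}$ to $\tilde Z$. Decomposing the full partition function according to the last point of $\hat\tau$ before $n$ and the position of $n$ relative to it (a $\hat Z$ or $\check Z$ type piece at the end), one obtains $Z_{n,\omega}^\gamma\le C(n)\sum_{0\le l\le n}\tilde Z_{l,\omega}^\gamma\,(\text{finite tail factor})$, so that $\mathbb{E}Z_{n,\omega}^\gamma$ grows at most polynomially in $n$ (the number of terms is $O(n^2)$ and each is bounded by $M$ times a controllable factor coming from $\check Z$ and the single large stretch, which is summable because $\gamma(1+\alpha)>\gamma>0$ — here one may shrink $\gamma$ or use $\gamma\in(0,1)$ freely). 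Then by Jensen, as in the proof of Theorem \ref{relevance1},
\begin{equation*}
\frac1n\mathbb{E}\log Z_{n,\beta,h,\omega}=\frac1{\gamma n}\mathbb{E}\log Z_{n,\beta,h,\omega}^\gamma\le\frac1{\gamma n}\log\mathbb{E}Z_{n,\beta,h,\omega}^\gamma\xrightarrow[n\to\infty]{}0,
\end{equation*}
whence $F(\beta,h)=0$.

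The main obstacle I anticipate is not the renewal induction itself (that is clean once the convolution structure is spotted) but the passage from $\tilde Z$ back to $Z$: one must verify that restricting to $\tilde\tau$ loses only a polynomially-bounded factor, i.e. that the "last incomplete block" contributions ($\check Z$-type pieces and the final large stretch) have $\gamma$-moments that are summable in their length and do not blow up the bound. This requires knowing that $\mathbb{E}\hat Z_j^\gamma$ and the analogous $\check Z$ moments decay fast enough, which ultimately rests on $K(n)=L(n)n^{-(1+\alpha)}$ with $\alpha>0$ and on the Gaussian moment bound (\ref{rev6_2}); the bookkeeping is routine but is where care is needed.
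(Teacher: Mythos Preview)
Your induction argument is correct and is exactly what the paper does: set $M=\max_{0\le n<k}A_n$, rewrite the right-hand side of (\ref{ren}) as $\sum_{r=k}^n B_r A_{n-r}$ with $\sum_{r\ge k}B_r=\varrho\le 1$, and conclude $A_n\le M$ for all $n$ by strong induction.

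Where you diverge from the paper, and where your proposal has a genuine gap, is the passage from $\sup_n A_n<\infty$ back to $F(\beta,h)=0$. You propose decomposing $Z_{n,\omega}$ according to the last point of $\hat\tau$ in $\{0,\ldots,n\}$, which produces a tail factor of $\check Z$-type (all stretches $\le q$). Controlling $\mathbb{E}\check Z_j^\gamma$ is the ``main obstacle'' you anticipate, and it is real: $\check Z_j$ involves order $j$ renewal points, its $\gamma$-moment is \emph{not} governed by $K(n)^\gamma\sim L(n)^\gamma n^{-(1+\alpha)\gamma}$ (the stretches are all bounded by $q$, so the tail of $K$ plays no role), and the hypothesis $\varrho\le 1$ does not directly bound it. Your remark about summability ``because $\gamma(1+\alpha)>\gamma>0$'' is off target here. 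One can extract a bound on $\mathbb{E}[\check Z_j^\gamma e^{\gamma(\beta\omega_j+h)}]$ from $\hat K(j+q+1)$ via $\hat Z_{j+q+1}\ge K(q+1)\check Z_j e^{\beta\omega_j+h}$, but removing the extra $e^{\gamma\beta\omega_j}$ factor (which is correlated with $\check Z_j$ through $\omega_{j-q},\ldots,\omega_{j-1}$) requires additional Gaussian manipulations that you do not supply.

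The paper sidesteps this entirely with a one-line trick: append a final stretch of length $q+1$. Restricting $\tilde Z_{N+q+1}$ to trajectories whose last interarrival is exactly $q+1$ forces $N\in\tau$ (unconstrained otherwise) and $N+q+1\in\hat\tau$ automatically, and the Hamiltonian picks up no extra $\omega$-terms between $N$ and $N+q$. Hence $\tilde Z_{N+q+1}\ge K(q+1)\,Z_N$ pointwise in $\omega$ (with the convention, noted in the paper, that the Hamiltonian of $Z_N$ runs from $0$ to $N$), so
\[
\mathbb{E}Z_N^\gamma\;\le\;\frac{\mathbb{E}\tilde Z_{N+q+1}^\gamma}{K(q+1)^\gamma}\;=\;\frac{A_{N+q+1}}{K(q+1)^\gamma}\;\le\;\frac{M}{K(q+1)^\gamma},
\]
which is bounded in $N$, and Jensen finishes the proof exactly as you wrote. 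No decomposition of $Z_n$, no $\check Z$-moment estimate, no tail bookkeeping is needed.
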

\begin{proof}[Proof of Lemma \ref{lemma_rho}]
 If (\ref{cdt}) is true then from (\ref{ren}) we can show by induction that for every $l$, $A_l \leq \max\{A_0,\ldots, A_{k-1}\}$. Therefore, 
 \begin{align*}
  F(\beta,h) = \lim \frac{1}{N\gamma}\mathbb{E}\log(Z_N)^{\gamma} &\stackrel{\hbox{(Jensen)}}{\leq} \lim \frac{1}{N\gamma}\log\mathbb{E}(Z_N^{\gamma}) \\&\leq \lim \frac{1}{N\gamma}\log\left(\frac{\mathbb{E}(\tilde{Z}_{N+q+1}^{\gamma})}{K(q+1)^{\gamma}}\right)\\&\leq \lim \frac{1}{N\gamma}\log\left(\frac{A_{N+q+1}}{K(q+1)^{\gamma}}\right)\\&=0.
\end{align*}
Note that in the partition function $Z_N$ considered above, the sum in the Hamiltonian should go from $0$ to $N$ (instead of going from $0$ to $N-1$, or $1$ to $N$) but all these definitions lead to the same free energy in the limit. Moreover, to go from the first line to the second line, we restrict $\tilde{Z}_{N+q+1}$ to renewal trajectories that start with a stretch of length $q+1$.
\end{proof} 
Therefore, our task is now to find parameters $h>h_c^a(\beta)$, $\gamma$ and $k$ that meet the requirements of Lemma \ref{lemma_rho}. Suppose now that
\begin{equation}\label{exp_h}
h=h_c^a(\beta)+\Delta
\end{equation}
 with $\Delta$ small but positive. Then we are going to prove:
\begin{lem}\label{hatK}
For all $\beta$, if $\gamma$ is close enough to $1$ and $\Delta>0$ is small enough then there exists a constant $c(\beta)$ such that
 \begin{equation}\label{tailKhat}
 \forall n\geq 1,\quad \hat{K}(n,\beta,h_c^a(\beta)+\Delta,\gamma) \leq c(\beta) L_1(n) n^{-(1+\alpha)\gamma}.
 \end{equation}
 Moreover, there exists $\beta_0>0$ and $\epsilon>0$ such that for all $\beta\in(0,\beta_0)$, $\Delta\in(0,\epsilon)$, $\gamma\in(1-\epsilon,1)$, (\ref{tailKhat}) holds with $c(\beta)$ replaced by $c(\beta_0)$.
\end{lem}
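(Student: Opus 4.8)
The plan is to reduce $\hat K(n)$ to an \emph{annealed} computation in which the big stretch must carry the whole $n^{-(1+\alpha)}$ tail. First I would decompose $\hat Z_{n,\beta,h,\omega}$ according to the position $m$ of the last renewal point before its unique stretch of length $>q$. With the convention $H_j(\beta,h,\omega)=\sum_{k=0}^{j-1}(\beta\omega_k+h)\delta_k$ (so the weight of the renewal at $m\in\{0,\dots,n-1\}$ is $e^{\beta\omega_m+h}$ while the one at $n$ is not counted), this yields the exact identity
\begin{equation*}
 \hat Z_{n,\beta,h,\omega}=e^{h}\sum_{m=0}^{n-q-1}K(n-m)\,\check W_{m,\beta,h,\omega},\qquad
 \check W_{m,\beta,h,\omega}:=E\big(e^{H_m(\beta,h,\omega)}\,e^{\beta\omega_m}\,\delta_m\,\mathbf{1}_{\{\hat\tau\cap\{1,\dots,m\}=\emptyset\}}\big),
\end{equation*}
i.e.\ $\check W_m$ is $\check Z_m$ with the extra factor $e^{\beta\omega_m}$ attached to the renewal at $m$. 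Since $x\mapsto x^{\gamma}$ is concave for $\gamma\in(0,1)$, Jensen's inequality gives $\hat K(n)=\mathbb{E}\hat Z_n^{\gamma}\le(\mathbb{E}\hat Z_n)^{\gamma}=\big(e^{h}\sum_{m}K(n-m)\,\mathbb{E}\check W_m\big)^{\gamma}$, so it suffices to estimate the annealed quantity $\mathbb{E}\check W_m$.

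The core of the proof is to show that, for $\Delta>0$ small enough, $\mathbb{E}\check W_{m,\beta,h_c^a(\beta)+\Delta}\le c_1(\beta)\,\rho(\beta)^{m}$ with $\rho(\beta)<1$. Integrating out the Gaussian disorder, $\mathbb{E}\check W_m$ equals, up to the bounded factor $\exp\!\big(\tfrac{\beta^2}{2}+\beta^2\sum_{n=1}^q|\rho_n|\big)$ coming from the extra weight $e^{\beta\omega_m}$ and its finite range correlations with the nearby renewal points, the annealed partition function of the pinning model conditioned to use only stretches $\le q$. By the Markov renewal / Perron--Frobenius construction of \cite{Poisat_frc}, this ``bounded stretch'' model is governed by the principal submatrix $Q^{*,\mathrm b}_{\beta}$ of $Q^{*}_{\beta}$ obtained by deleting every state involving $\star$, whose Perron--Frobenius eigenvalue $\lambda^{\mathrm b}(\beta)$ is \emph{strictly} smaller than $\lambda(\beta)$ (strict monotonicity of the spectral radius of an irreducible nonnegative matrix under passage to a proper principal submatrix). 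Hence its critical point equals $h_c^{\mathrm b}(\beta)=-\tfrac{\beta^2}{2}-\log\lambda^{\mathrm b}(\beta)$ with $h_c^{\mathrm b}(\beta)-h_c^a(\beta)=\log\!\big(\lambda(\beta)/\lambda^{\mathrm b}(\beta)\big)>0$; and because all stretches are $\le q$ one has $\imath_m\ge m/q$, so strictly below $h_c^{\mathrm b}(\beta)$ this partition function decays geometrically in $m$, essentially like $\big(e^{\Delta}\lambda^{\mathrm b}(\beta)/\lambda(\beta)\big)^{m/q}$. This gives the claim with $\rho(\beta)=\big(e^{\Delta}\lambda^{\mathrm b}(\beta)/\lambda(\beta)\big)^{1/q}$ as soon as $0<\Delta<\log(\lambda(\beta)/\lambda^{\mathrm b}(\beta))$.

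It then remains to plug the geometric bound into the previous display: $\mathbb{E}\hat Z_n\le e^{h}c_1(\beta)\sum_{m\ge0}\rho(\beta)^{m}K(n-m)$. Splitting the sum at $n/2$, the range $m>n/2$ contributes a term exponentially small in $n$, while on $m\le n/2$ one uses that $K$ is regularly varying of index $-(1+\alpha)$ (Potter's bounds give $K(n-m)\le C\,L(n)\,n^{-(1+\alpha)}$ there); since $\sum_m\rho(\beta)^{m}<\infty$ this gives $\mathbb{E}\hat Z_n\le C'(\beta)\,L(n)\,n^{-(1+\alpha)}$ for $n$ large (and $\hat K(n)=0$ for $n\le q$, while the remaining finitely many values only enlarge the constant). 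Raising to the power $\gamma$ and using $(C')^{\gamma}\le\max(C',1)$ yields (\ref{tailKhat}) with $L_1(n):=L(n)^{\gamma}$ (slowly varying) and $c(\beta):=\max(e^{h}C'(\beta),1)$, which depends on $\beta$ only; in fact the estimate holds for every $\gamma\in(0,1)$, and $\gamma$ close to $1$ is needed only downstream, to keep $(1+\alpha)\gamma>1$ in Lemma \ref{lemma_rho}. For the uniform statement, $\beta\mapsto\lambda(\beta)$ and $\beta\mapsto\lambda^{\mathrm b}(\beta)$ are continuous with $\lambda^{\mathrm b}(0)<\lambda(0)=1$, so $h_c^{\mathrm b}(\beta)-h_c^a(\beta)$ is bounded below on some $[0,\beta_0]$; choosing $\epsilon$ below that bound, all the constants above are continuous, hence bounded, on the compact set $[0,\beta_0]\times[0,\epsilon]\times[1-\epsilon,1]$, and one takes $c(\beta_0)$ to be their supremum.

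The hard part is the middle step: proving that the bounded stretch annealed model is \emph{genuinely} subcritical at $h_c^a(\beta)$ --- i.e.\ the strict inequality $\lambda^{\mathrm b}(\beta)<\lambda(\beta)$ together with the ensuing true geometric (not merely sub-exponential) decay of its partition function --- and that the decay rate stays bounded away from $1$ as $\beta\downarrow0$. This rests on the irreducibility of $Q^{*}_{\beta}$ from \cite{Poisat_frc}, on strict Perron--Frobenius monotonicity under deletion of communicating states, and on continuity of the Perron eigenvalue in $\beta$; note that the delicate large-$\beta$ behaviour of $\lambda(\beta)$ (cf.\ \cite{MR1659185}) plays no role here, since $\beta$ ranges over a bounded set. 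A secondary, purely bookkeeping, difficulty is the careful handling of the boundary $\omega$-terms near $m$ and of the Hamiltonian normalisation, so that the decomposition of $\hat Z_n$ displayed above is an identity rather than an inequality.
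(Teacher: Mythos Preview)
Your argument is correct and reaches the same conclusion as the paper, but along a genuinely different route. The paper applies the fractional inequality $(\sum a_i)^{\gamma}\le\sum a_i^{\gamma}$ \emph{before} taking the disorder expectation, which produces $\hat K(n)\le\sum_{l} K(l)^{\gamma}\,\mathcal Z_{n-l}$ with $\mathcal Z_j=\mathbb{E}\big(e^{\gamma(\beta\omega_j+h)}\check Z_j^{\gamma}\big)$; the geometric decay of $\mathcal Z_j$ then requires a Perron--Frobenius analysis of a $\gamma$-dependent transfer matrix $Q_{\beta,h,\gamma}$ restricted to $\{1,\dots,q\}^q$, and this is precisely where the hypothesis ``$\gamma$ close enough to $1$'' enters (continuity of the spectral radius from the value $\gamma=1$). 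You instead apply Jensen's inequality $\mathbb{E}\hat Z_n^{\gamma}\le(\mathbb{E}\hat Z_n)^{\gamma}$ \emph{after} the decomposition, so the entire annealed computation is done at $\gamma=1$ and only the bounded-stretch matrix $Q^{*,\mathrm b}_{\beta}=\lambda(\beta)^{-1}Q^{*}_{\beta}\!\restriction_{\{1,\dots,q\}^q}$ appears.

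This buys you two things: the transfer-matrix step no longer depends on $\gamma$ (only on $\beta$ and $\Delta$), and the bound in fact holds for every $\gamma\in(0,1)$, not just $\gamma$ near $1$ --- a mild strengthening of the lemma as stated. The paper's approach, on the other hand, keeps the $K(l)^{\gamma}$ factor explicit inside the sum, which is closer in spirit to the global fractional-moment scheme and makes the convolution structure used downstream more transparent. Both routes rely on the same core fact --- strict inequality $\lambda^{\mathrm b}(\beta)<\lambda(\beta)$ from irreducibility of $Q^{*}_{\beta}$ and strict Perron--Frobenius monotonicity --- and both handle uniformity in small $\beta$ via continuity of the spectral radius and the limit $\lambda^{\mathrm b}(0)=P(T_1\le q)<1=\lambda(0)$. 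Your conversion from decay per renewal step to decay per unit time via $\imath_m\ge m/q$ is slightly coarser than the paper's device of inserting an $e^{\delta y_q}$ tilt to obtain $e^{-\delta m}$ directly, but it is perfectly adequate here.
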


\begin{proof}[Proof of Lemma \ref{hatK}]
 Let  $n>q$. Then, by decomposing $\hat{Z}_{n,\beta,h,\omega}$ according to the last stretch before $n$, we get
 \begin{equation*}
  \hat{Z}_{n,\beta,h,\omega} = \sum_{l = q+1}^n K(l) \check{Z}_{n-l,\beta,h,\omega}e^{\beta\omega_{n-l}+h}
 \end{equation*}
hence
\begin{equation}\label{aux1}
 \hat{K}(n,\beta,h,\gamma) \leq \sum_{l=q+1}^n K(l)^{\gamma} \mathcal{Z}_{n-l,\beta,h}
 \end{equation}
where \begin{equation*}
       \mathcal{Z}_{j,\beta,h} := \mathbb{E}\left( e^{\gamma(\beta\omega_j+h)} \check{Z}^{\gamma}_{j,\beta,h,\omega} \right).
      \end{equation*}
We now look at the rate of decay of the sequence $\mathcal{Z}_n$. As in (\ref{decompZgamma}), one can decompose $\mathcal{Z}_n$ according to the number of renewals before $n$, except that by definition of the $\check{Z}_k$'s, for each $k$ in $\{1,\ldots, n\}$, the sum over $$\{t_i\geq1, 1\leq i \leq k : t_1+\ldots+t_k = n\}$$ is restricted to $$\{1\leq t_i \leq q, 1\leq i \leq k : t_1 +\ldots + t_k = n\}.$$ By using again (\ref{rev6_1}) and (\ref{rev6_2}), we finally get
\begin{equation}\label{Zdecay}
 \mathcal{Z}_{n,\beta,h} \leq C(\beta,\Delta) \sum_{k=1}^n \sum_{\substack{\overline{t}_1,\ldots,\overline{t}_k\\t_1+\ldots+t_k = n\\t_i\leq q, 1\leq i\leq k}}K(t_1)^{\gamma}\ldots K(t_q)^{\gamma} \prod_{i=1}^{n-1} Q_{\beta,h,\gamma}(\overline{t}_{i},\overline{t}_{i+1})
\end{equation}
where $C(\beta,\Delta)$ is a constant coming from boundary effects (which depends on $\beta$ and $\Delta$ because of (\ref{exp_h})), and for all $x$,$y$ in $E^q$,
\begin{equation*}
 Q_{\beta,h,\gamma}(x,y) = \exp\left(\frac{\beta^2 \gamma^2}{2} +h\gamma +\gamma^2 \beta^2 G(y) \right) K(y_q)^{\gamma}\mathbf{1}_{\{x\rightsquigarrow y\}}.
\end{equation*}
From (\ref{Zdecay}), the behaviour of $\mathcal{Z}_n$ is related to the Perron-Frobenius eigenvalue of $Q_{\beta,h,\gamma}$ restricted to the space $\{1,\ldots,q\}^q$. Let us make this last statement clearer. Since by Theorem \ref{annealed} we have $h_c^a(\beta) = -\frac{\beta^2}{2} - \log \lambda(\beta)$, then
\begin{equation*}
Q_{\beta,h_c^a(\beta),\gamma=1} = \lambda(\beta)^{-1}Q_{\beta}^*,
\end{equation*}
the Perron-Frobenius eigenvalue of which equals $1$ by (\ref{lambda}). By strict monotonicity of the Perron-Frobenius eigenvalue with respect to matrix entries (cf. \cite[Theorem 1.1]{Seneta} or \cite[Appendix A.8]{GG_Book}), the Perron-Frobenius eigenvalue of $Q_{\beta,h_c^a(\beta),\gamma=1}$ restricted to $\{1,\ldots,q\}^q$ (obtained by setting $Q_{\beta,h_c^a(\beta),\gamma=1}(x,y)$ off to $0$ whenever one of the $x_i$'s or $y_i$'s equals $\star$) is then strictly smaller than $1$. Now, by invoking continuity of the Perron-Frobenius eigenvalue with respect to parameters, and by choosing $\gamma$ below but close enough to $1$ as well as $\Delta>0$ small enough, we affirm that the Perron-Frobenius eigenvalue of $Q_{\beta,h_c^a(\beta)+\Delta,\gamma}$ restricted to $\{1,\ldots,q\}^q$ is still strictly smaller than $1$. For such $\gamma$ and $\Delta$, and by using again the monotonicity argument, let $\delta = \delta(\Delta,\gamma)$ be the unique positive number such that the Perron-Frobenius eigenvalue of the matrix $Q^{(\delta)}$ (here we omit other parameters for clarity), defined as 
\begin{equation*}
 \left(\exp\left(\delta y_q + \frac{\beta^2}{2}\gamma(\gamma-1)+\gamma^2\beta^2G(y)+\Delta\gamma\right)\frac{K(y_q)^{\gamma}}{\lambda(\beta)^{\gamma}}\mathbf{1}_{\{x\rightsquigarrow y\}} \right)_{x,y\in \{1,\ldots,q\}^q}
\end{equation*}
is equal to $1$. Let $\nu^{(\delta)}$ be a positive right eigenvector of $Q^{(\delta)}$ associated to $1$. Then (\ref{Zdecay}) becomes 
\begin{align*}
 \mathcal{Z}_{n,\beta,h} \leq & C(\beta,\Delta)\max_{x,y}(\nu^{(\delta)}(y)/\nu^{(\delta)}(x))e^{-\delta n}\\&\times \sum_{k=1}^n \sum_{\substack{\overline{t}_1,\ldots,\overline{t}_k\\t_1+\ldots+t_k = n\\t_i\leq q, 1\leq i\leq k}}K(t_1)^{\gamma}\ldots K(t_q)^{\gamma} \prod_{i=1}^{n-1} Q^{(\delta)}(\overline{t}_{i},\overline{t}_{i+1})\frac{\nu^{(\delta)}(\overline{t}_{i+1})}{\nu^{(\delta)}(\overline{t}_{i})}.
\end{align*}
Since $(x,y)\mapsto Q^{(\delta)}(x,y)\frac{\nu^{(\delta)}(y)}{\nu^{(\delta)}(x)}$ is a Markov chain kernel, we have
\begin{equation}\label{aux2}
 \mathcal{Z}_{n,\beta,h_c^a(\beta)+\Delta,\gamma} \leq c e^{-\delta n}
\end{equation}
where $c$ is a positive constant possibly depending on $\beta$, $\Delta$, and $\gamma$.

From (\ref{defK}), (\ref{aux1}) and (\ref{aux2}) one can then deduce the first point of the lemma. Indeed, one can write
\begin{equation*}
\sum_{l=q+1}^n K(l)^{\gamma}e^{-\delta(n-l)} = \sum_{l=q+1}^{n/2} K(l)^{\gamma}e^{-\delta(n-l)} + \sum_{l=n/2+1}^n K(l)^{\gamma}e^{-\delta(n-l)}.
\end{equation*}
The first sum is simply bounded by $e^{-\delta n/2}\sum_{k\geq q+1}L(k)^{\gamma}k^{-(1+\alpha)\gamma}$ (finite for $\gamma$ close enough to $1$). As for the second sum, we write
\begin{align*}
\sum_{l=n/2+1}^n K(l)^{\gamma}e^{-\delta(n-l)} &= \frac{L(n)^{\gamma}}{n^{(1+\alpha)\gamma}} \sum_{l=n/2+1}^n \left( \frac{n}{l}\right)^{(1+\alpha)\gamma} \left( \frac{L(n)}{L(l)}\right)^{\gamma}e^{-\delta(n-l)}\\
& \leq \frac{2^{(1+\alpha)\gamma}}{1-e^{-\delta}}\sup_{1/2\leq t \leq 1} \left(\frac{L(n)}{L(nt)}\right)^{\gamma},
\end{align*}
which is bounded because $\sup_{1/2\leq t \leq 1} \left(\frac{L(n)}{L(nt)}\right)^{\gamma}$ converges to $1$ by the property of regularly varying functions (cf. \cite[(A.18)]{GG_Book} or \cite{MR1015093}).

We now deal with the second point (uniform version) of the lemma. Let $\overline{\mathbf{1}}$ be the vector such that for all $x$ in $E^q$, if one of the $x_i$'s equals $\star$, then $\overline{\mathbf{1}}(x) = 0$, else $\overline{\mathbf{1}}(x) = 1$. Notice that for all $\beta\geq0$ and $\gamma\leq 1$,
\begin{equation*}
(Q_{\beta,h_c^a(\beta),\gamma}\overline{\mathbf{1}})(x) \leq \exp(\beta^2(1/2+\max G))\lambda(\beta)^{-\gamma}\left(K(1)^{\gamma}+\ldots + K(q)^{\gamma} \right).
\end{equation*}
As a consequence, for every $\eta>0$, there exists $\beta_0>0$ and $\epsilon>0$ such that for all $\beta\leq\beta_0$ and $\gamma$ in $(1-\epsilon,1)$, the Perron-Frobenius eigenvalue of $Q_{\beta,h_c^a(\beta),\gamma}$ restricted to $\{1,\ldots,q\}^q$ is smaller than
$$K(1) +\ldots + K(q) + \eta = P(T_1\leq q) + \eta,$$
which is smaller than $1$ when $\eta$ is small enough. One can then choose $$\Delta<\Delta_0 := -\log(P(T_1\leq q)+\eta)$$ in the lines above to prove the uniform version of the lemma.
\end{proof}

Let now $\Delta$ be close enough to 0 and $\gamma$ close enough to 1 so that the tail behaviour of $\hat{K}$ is as in Lemma \ref{hatK} and $(1+\alpha)\gamma -1 > 1$ (which is possible since $\alpha>1$). Then we have
\begin{align}\label{sum}
 \varrho := \varrho(\beta,h,\gamma,k) = \sum_{r\geq k} \sum_{l=0}^{k-1} \hat{K}(r-l) A_l &\leq c(\beta) \sum_{r\geq k} \sum_{l=0}^{k-1} (r-l)^{-(1+\alpha)\gamma}L_2(r-l)A_l \nonumber\\ &\leq C(\beta) \sum_{l=0}^{k-1}\frac{L_2(k-l)A_l}{(k-l)^{(1+\alpha)\gamma-1}}
\end{align}
where $C(\beta)$ is a constant which can be made uniform on $(0,\beta_0)$ for all $\beta_0$. Our goal in the next sections is to make this last sum small enough, by suitably choosing the shift $\Delta := h_c(\beta) - h_c^a(\beta)$ as a function of $\beta$, and the parameter $k$ as a function of $\Delta$. Theorems \ref{relevance3} and \ref{relevance2} will follow by application of Lemma \ref{lemma_rho}.

\subsection{Proof of Theorem \ref{relevance2}}

In what follows, we set $k=k(\beta)=\frac{1}{a\beta^2}$ and $\Delta = a\beta^2$, with $a$ to be specified later. Then
\begin{equation*}
 \varrho \leq c(\beta) (S_1 + S_2)
\end{equation*}
where
\begin{equation*}
 S_1 = \sum_{l=0}^{k(\beta)-R-1} \frac{L_2(k-l)A_l}{(k-l)^{(1+\alpha)\gamma-1}}
\end{equation*}
and
\begin{equation*}
 S_2 = \sum_{l = k(\beta)-R}^{k(\beta)-1} \frac{L_2(k-l)A_l}{(k-l)^{(1+\alpha)\gamma-1}}
\end{equation*}
with $R\leq k(\beta)$ to be specified. On one hand we have:
\begin{lem}
 $S_1$ can be made small by taking $R$ large enough and $a$ small enough.
\end{lem}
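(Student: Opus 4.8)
The plan is to dominate $S_1$ by a fixed constant times the \emph{tail} of a convergent series, the essential input being a bound on the coefficients $A_l=\mathbb{E}\tilde Z_{l,\beta,h,\omega}^{\gamma}$ that is uniform over the whole range $0\le l\le k(\beta)$. First I would bound $A_l$ as follows. Since $\hat\tau\subseteq\tau$ we have $\tilde Z_{l,\beta,h,\omega}\le E(e^{H_l(\beta,h,\omega,\tau)}\delta_l)$, so Jensen's inequality applied to $x\mapsto x^{\gamma}$ gives $A_l\le(Z^a_{l,\beta,h})^{\gamma}$ with $Z^a_{l,\beta,h}:=\mathbb{E}E(e^{H_l(\beta,h,\omega,\tau)}\delta_l)$. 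Writing $h=h_c^a(\beta)+\Delta$ and using the trivial bound $\sum_{k=0}^{l-1}\delta_k\le l$ on the number of contacts yields $Z^a_{l,\beta,h_c^a(\beta)+\Delta}\le e^{\Delta l}\,Z^a_{l,\beta,h_c^a(\beta)}$, hence, since $\Delta=a\beta^2$ and $l\le k(\beta)=1/(a\beta^2)$ force $\Delta l\le1$,
\begin{equation*}
A_l\le\bigl(e^{\Delta l}\,Z^a_{l,\beta,h_c^a(\beta)}\bigr)^{\gamma}\le\Bigl(e\sup_{m\ge0}Z^a_{m,\beta,h_c^a(\beta)}\Bigr)^{\gamma}=:\bar A(\beta),\qquad 0\le l\le k(\beta).
\end{equation*}
The Markov renewal construction of \cite{Poisat_frc} is what makes $\sup_{m}Z^a_{m,\beta,h_c^a(\beta)}$ finite: at the annealed critical point $Z^a$ is the Green function of a Markov renewal kernel of Perron--Frobenius eigenvalue $1$ whose interarrivals have finite mean when $\alpha>1$, so the associated renewal mass function converges and is bounded.

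Granting this, substitute $j=k-l$ in $S_1$, which then runs through $\{R+1,\dots,k\}$ over $0\le l\le k-R-1$, to get
\begin{equation*}
S_1\le\bar A(\beta)\sum_{l=0}^{k-R-1}\frac{L_2(k-l)}{(k-l)^{(1+\alpha)\gamma-1}}=\bar A(\beta)\sum_{j=R+1}^{k}\frac{L_2(j)}{j^{(1+\alpha)\gamma-1}}\le\bar A(\beta)\sum_{j\ge R+1}\frac{L_2(j)}{j^{(1+\alpha)\gamma-1}}.
\end{equation*}
Because $\gamma$ was chosen so that $(1+\alpha)\gamma-1>1$, the series $\sum_{j\ge1}L_2(j)\,j^{-((1+\alpha)\gamma-1)}$ converges (a slowly varying function tested against a summable power), so its tail tends to $0$ as $R\to\infty$, and choosing $R$ large makes $S_1$ as small as desired. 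The smallness of $a$ enters twice: it keeps $\Delta=a\beta^2$ inside the window where the tail bound of Lemma \ref{hatK} holds and where $R\le k(\beta)$, and it is exactly the identity $\Delta\,k(\beta)=1$ that upgrades $e^{\Delta l}\le e$ on $\{0\le l\le k(\beta)\}$ into the $l$-uniform bound on $A_l$. For the uniform-in-$\beta$ part of Theorem \ref{relevance2} one additionally needs $\bar A(\beta)$ bounded for $\beta\le1$; this holds because $G$ is bounded and, for $\alpha>1$, the interarrival law has finite mean, so the constant in the Markov renewal estimate is locally bounded in $\beta$, as in the uniform version of Lemma \ref{hatK}.

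The step I expect to be the main obstacle is precisely the finiteness, and local uniformity in $\beta$, of $\sup_{m}Z^a_{m,\beta,h_c^a(\beta)}$: everything else is soft (Jensen, the crude contact bound, and regular variation). Concretely, one has to check that the constant coming out of the Markov renewal theorem for the modulating chain does not degenerate as $h\downarrow h_c^a(\beta)$; this rests on $\alpha>1$ forcing a finite mean for the interarrivals, so that the critical annealed partition function is already bounded, and then on continuity of the kernel in $\beta$ to carry the bound over to a neighbourhood --- in the same spirit as the uniform version of Lemma \ref{hatK}.
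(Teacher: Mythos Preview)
Your proof is correct and follows the same skeleton as the paper's: bound $A_l$ uniformly over $0\le l\le k(\beta)$ via Jensen and the choice $\Delta\,k(\beta)=1$, then recognise $S_1$ as the tail of a convergent series. The only difference is cosmetic: where you invoke the Markov renewal theorem (finite mean when $\alpha>1$) to get $\sup_m Z^a_{m,\beta,h_c^a(\beta)}<\infty$, the paper instead uses superadditivity of $\log Z^a_m$ directly at $h=h_c^a(\beta)+a\beta^2$ to obtain $Z^a_l\le c(\beta)e^{F^a(\beta,h)l}\le c(\beta)e^{a\beta^2 l}$, which is softer (it needs no renewal-theoretic input and works for all $\alpha$) and automatically gives the local uniformity in $\beta$ you flag as the main obstacle.
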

\begin{proof}
 We have
 \begin{equation*}
  A_l = \mathbb{E}(\tilde{Z}_l^{\gamma}) \stackrel{\hbox{(Jensen)}}{\leq} (\mathbb{E}\tilde{Z}_l )^{\gamma} \leq c(\beta) \exp(\gamma F^a(\beta,h_c^a(\beta)+a\beta^2)l)\leq c(\beta)e^{a\beta^2 l},
 \end{equation*}
 which is lower than a constant $c(\beta)$ whenever $l\leq k(\beta)$. To obtain the second inequality we first use that $$\tilde{Z}_l \leq E\left(\exp(H_l(\beta,h,\omega)) \delta_l\right)$$ and then that $$\mathbb{E}E\left(\exp(H_l(\beta,h,\omega)) \delta_l\right) \leq c(\beta) \exp(F^a(\beta,h)l)$$ by superadditivity arguments (cf. \cite[Proof of Theorem 3.1]{Poisat_frc}). To obtain the third inequality we use that $$H_l(\beta, h_c^a(\beta)+a\beta^2) \leq H_l(\beta, h_c^a(\beta)) + a\beta^2 l,$$ which implies $$F^a(\beta,h_c^a(\beta)+a\beta^2)\leq F^a(\beta,h_c^a(\beta))+ a\beta^2 = a\beta^2.$$ Therefore, by summing on $l$ we get
 \begin{equation*}
  S_1 \leq \frac{c(\beta)L_3(R)}{R^{(1+\alpha)\gamma - 2}}
 \end{equation*}
which can be made small by choosing $R$ large enough. Since $R\leq k(\beta)=\frac{1}{a\beta^2}$, this may require $a$ small enough. 
\end{proof}

On the other hand we have $S_2 \leq C_2 \max_{k(\beta)-R\leq l < k(\beta)} A_l$. We will show that this can be made small by taking $a$ small enough, by using the same change of measure argument used in the case of i.i.d. disorder. For this purpose, define
\begin{equation*}
  \frac{d\mathbb{P}_{N,\lambda}}{d\mathbb{P}}(\omega) = \frac{e^{-\lambda \sum_{i=1}^N \omega_i}}{\mathbb{E}(e^{-\lambda \sum_{i=1}^N \omega_i})}.
\end{equation*}
Note that from the Gaussian assumption on $\omega$, this fraction equals $$\exp\left(-\lambda\sum_{i=1}^N\omega_i -\frac{\lambda^2}{2}v_N\right),$$ where $v_N := \var(\sum_{i=1}^N \omega_i)$.
\begin{lem}\label{holder2}
 There exists $c>0$ such that for all $N$, all $\lambda$ and $\gamma$ in $(0,1)$
 \begin{equation*}
  \mathbb{E}(\tilde{Z}_N^{\gamma}) \leq (\mathbb{E}_{N,\lambda}\tilde{Z}_N)^{\gamma} \exp\left(c\frac{\gamma}{1-\gamma}\lambda^2 N\right).
 \end{equation*}
\end{lem}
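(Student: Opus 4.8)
The inequality to prove is of H\"older type combined with a Gaussian tilt, so the plan is to apply H\"older's inequality to split the exponent $\gamma$ off the partition function, then identify the leftover factor as a Gaussian moment generating function that can be controlled by the bounded variance growth of $\omega$. Concretely, write
\[
\tilde Z_N^{\gamma} = \left(\tilde Z_N\,\frac{d\mathbb{P}}{d\mathbb{P}_{N,\lambda}}\right)^{\gamma}\left(\frac{d\mathbb{P}_{N,\lambda}}{d\mathbb{P}}\right)^{\gamma}
= \left(\tilde Z_N\,\frac{d\mathbb{P}}{d\mathbb{P}_{N,\lambda}}\right)^{\gamma}\exp\!\left(\gamma\lambda\sum_{i=1}^N\omega_i+\tfrac{\gamma\lambda^2}{2}v_N\right),
\]
and then apply H\"older with exponents $1/\gamma$ and $1/(1-\gamma)$ under $\mathbb{E}$ (i.e. under $\mathbb{P}$). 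The first factor, raised to the power $1/\gamma$, gives $\mathbb{E}$ of $\tilde Z_N \frac{d\mathbb{P}}{d\mathbb{P}_{N,\lambda}}$, which is exactly $\mathbb{E}_{N,\lambda}(\tilde Z_N)$ by the definition of the change of measure; raising this back to the power $\gamma$ produces the factor $(\mathbb{E}_{N,\lambda}\tilde Z_N)^{\gamma}$.

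The second factor contributes $\big[\mathbb{E}\exp\!\big(\tfrac{\gamma}{1-\gamma}\lambda\sum_{i=1}^N\omega_i+\tfrac{\gamma}{1-\gamma}\cdot\tfrac{\lambda^2}{2}v_N\big)\big]^{1-\gamma}$. Here I would use that $\sum_{i=1}^N\omega_i$ is centered Gaussian with variance $v_N$, so its exponential moment at parameter $t:=\tfrac{\gamma}{1-\gamma}\lambda$ is $\exp(t^2 v_N/2)$; multiplying by the remaining deterministic exponential and taking the $(1-\gamma)$ power, the exponent becomes
\[
(1-\gamma)\left(\frac{t^2 v_N}{2} + \frac{\gamma}{1-\gamma}\frac{\lambda^2 v_N}{2}\right)
= \frac{\gamma^2\lambda^2 v_N}{2(1-\gamma)} + \frac{\gamma\lambda^2 v_N}{2}
= \frac{\gamma\lambda^2 v_N}{2(1-\gamma)}.
\]
Finally, the finite range correlation assumption gives $v_N = N + 2\sum_{i=2}^N(\rho_{1}\text{-type terms}) \le N(1+2\sum_{n=1}^q|\rho_n|) =: N/(2c)$ for a suitable constant $c$ independent of $N$, so the exponent is bounded by $c\frac{\gamma}{1-\gamma}\lambda^2 N$, which is the claimed bound.

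The argument is essentially routine; the only point requiring a little care is the linear-in-$N$ bound on $v_N$, which is where the finite range hypothesis on $\rho$ enters (exactly as in the variance estimate already used in the proof of Lemma~\ref{lemma_1}). One should also note that the bound must hold for \emph{all} $\lambda$, including negative ones and the $\lambda$ that will later be chosen scaling like $\beta$; since the Gaussian moment computation is exact for every real $\lambda$ and $v_N\ge 0$, there is no restriction, and the same constant $c$ works throughout. I do not anticipate a genuine obstacle here: the statement is a packaging lemma whose role is to trade the fractional moment $\mathbb{E}\tilde Z_N^{\gamma}$ for the first moment under a tilted measure at the cost of an explicit exponential factor that will be negligible once $\lambda$ and $N\asymp k(\beta)$ are chosen appropriately in the proof of Theorem~\ref{relevance2}.
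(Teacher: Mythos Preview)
Your argument is essentially the paper's proof: H\"older's inequality, a Gaussian moment generating function computation yielding the exponent $\frac{\gamma\lambda^2 v_N}{2(1-\gamma)}$, and the linear bound $v_N\le cN$ from the finite range assumption. The only cosmetic difference is that the paper applies H\"older under $\mathbb{P}_{N,\lambda}$ (after writing $\mathbb{E}(\tilde Z_N^\gamma)=\mathbb{E}_{N,\lambda}(\tilde Z_N^\gamma\,d\mathbb{P}/d\mathbb{P}_{N,\lambda})$), while you apply it under $\mathbb{P}$; the two are equivalent.

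One notational slip to fix: in your displayed decomposition you have the Radon--Nikodym derivatives swapped. Since $\frac{d\mathbb{P}_{N,\lambda}}{d\mathbb{P}}=\exp(-\lambda\sum\omega_i-\tfrac{\lambda^2}{2}v_N)$, the identity should read
\[
\tilde Z_N^{\gamma}=\Bigl(\tilde Z_N\,\tfrac{d\mathbb{P}_{N,\lambda}}{d\mathbb{P}}\Bigr)^{\gamma}\Bigl(\tfrac{d\mathbb{P}}{d\mathbb{P}_{N,\lambda}}\Bigr)^{\gamma},
\]
so that the first factor under $\mathbb{E}$ gives $\mathbb{E}\bigl(\tilde Z_N\,\tfrac{d\mathbb{P}_{N,\lambda}}{d\mathbb{P}}\bigr)=\mathbb{E}_{N,\lambda}\tilde Z_N$. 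Your subsequent exponential computation is already consistent with this corrected version, so the final bound is unaffected.
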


\begin{proof}
By H\"older inequality we have
\begin{equation}\label{holder}
\mathbb{E}(\tilde{Z}_N^{\gamma}) =  \mathbb{E}_{N,\lambda}\left(\tilde{Z}_N^{\gamma} \frac{d\mathbb{P}}{d\mathbb{P}_{N,\lambda}}(\cdot)\right) \leq (\mathbb{E}_{N,\lambda}\tilde{Z}_N)^{\gamma}  \mathbb{E}_{N,\lambda}\left(\left( \frac{d\mathbb{P}}{d\mathbb{P}_{N,\lambda}}(\cdot)\right)^{1/(1-\gamma)} \right)^{1-\gamma}
\end{equation}
The last factor on the right-hand side of (\ref{holder}) is equal to
\begin{equation*}
 \mathbb{E}\left[\left( e^{\lambda\sum\omega_i + \frac{\lambda^2}{2}v_N} \right)^{\frac{1}{1-\gamma}} e^{-\lambda\sum\omega_i -\frac{\lambda^2}{2}v_N}\right]^{1-\gamma} = e^{\frac{\lambda^2\gamma}{2(1-\gamma)}v_N}.
\end{equation*}
and the lemma is true with $c := \sup_{N\geq1} (v_N/N)$, which is finite since $v_N \sim N(1 + 2\sum_{k=1}^q \rho_k)$ as $N$ tends to $+\infty$.
\end{proof}

If $N=j$ and $\lambda = \frac{1}{\sqrt{j}}$, we get:
\begin{equation}\label{Aj}
 A_j \leq (\mathbb{E}_{j,1/\sqrt{j}}\tilde{Z}_j)^{\gamma} \exp\left(c\frac{\gamma}{1-\gamma}\right).
\end{equation}

\begin{pr}
 If $h=h_c^a(\beta)+\Delta$ then
 \begin{equation*}
  \mathbb{E}_{j,\lambda}(\tilde{Z}_j) \leq c(\beta) E_{\beta}\left( e^{(\Delta-\overline{\rho}\beta\lambda) \sum_{i=1}^j \delta_i} \right)
 \end{equation*}
where $\overline{\rho} = 1 + 2\sum_{k=1}^q \rho_k$.
\end{pr}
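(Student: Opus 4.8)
The plan is to integrate out the disorder first, exactly as in the annealed analysis, and then to recognise the Markov renewal law $P_\beta$ of \cite{Poisat_frc}. First I would exchange the disorder average $\mathbb{E}_{j,\lambda}$ and the renewal average $E$ by Fubini. Recalling that $\tilde{Z}_{j,\beta,h,\omega}=E(\exp(H_j(\beta,h,\omega))\mathbf{1}_{\{j\in\hat{\tau}\}})$ with $H_j=\sum_{k=0}^{j-1}(\beta\omega_k+h)\delta_k$, and that the density of $\mathbb{P}_{j,\lambda}$ equals $\exp(-\lambda\sum_{i=1}^j\omega_i-\tfrac{\lambda^2}{2}v_j)$, the inner disorder average is, $\omega$ being centred Gaussian, the exponential of one half the variance of $\beta\sum_{k=0}^{j-1}\delta_k\omega_k-\lambda\sum_{i=1}^j\omega_i$; the two $\tfrac{\lambda^2}{2}v_j$ contributions then cancel and, writing $V_\tau:=\var(\sum_k\delta_k\omega_k)$ and $C_\tau:=\cov(\sum_k\delta_k\omega_k,\sum_{i=1}^j\omega_i)$ for the disorder variance and covariance at fixed $\tau$, one gets
\[
\mathbb{E}_{j,\lambda}(\tilde{Z}_j)=E\!\left[\exp\!\Big(h\,\imath_j+\tfrac{\beta^2}{2}V_\tau-\beta\lambda\,C_\tau\Big)\mathbf{1}_{\{j\in\hat{\tau}\}}\right],
\]
where $\imath_j=\sum_{k=0}^{j-1}\delta_k$ equals $\sum_{i=1}^j\delta_i$ on $\{j\in\hat{\tau}\}$ and $E$ is the original renewal average.

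Next I would estimate the two quadratic terms. For $V_\tau$ I would reuse the bound on $\var(\sum_k\delta_k\omega_k)$ established just before (\ref{rev6_2}), namely $V_\tau\le\imath_j+2\sum_iG(\overline{t}_i)+c$ with $c$ depending only on $q$ and $(\rho_n)_n$. The covariance $C_\tau$ is the genuinely new ingredient: since $C_\tau=\sum_{k\,:\,\delta_k=1}\sum_{i=1}^j\rho_{|k-i|}$, for a renewal point $k$ at distance more than $q$ from both $0$ and $j$ the inner sum equals $\rho_0+2\sum_{l=1}^q\rho_l=\overline{\rho}$, whereas only finitely many (of order $q$) renewal points lie closer to the boundary, each contributing an amount bounded in absolute value by $1+2\sum_l|\rho_l|$ (note that $j\in\hat{\tau}$ forces the last stretch to exceed $q$, so there is in fact no renewal point in $(j-q,j)$). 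Hence $C_\tau\ge\overline{\rho}\,\imath_j-c_1$ for a constant $c_1=c_1(q)$. Inserting both bounds together with $h=h_c^a(\beta)+\Delta=-\tfrac{\beta^2}{2}-\log\lambda(\beta)+\Delta$, the $\pm\tfrac{\beta^2}{2}\imath_j$ contributions cancel and (taking $\lambda\in(0,1]$, the range of interest, so that the boundary factor $e^{\beta\lambda c_1}$ is bounded) we reach
\[
\mathbb{E}_{j,\lambda}(\tilde{Z}_j)\le c(\beta)\,E\!\left[e^{(\Delta-\overline{\rho}\beta\lambda)\imath_j}\,\lambda(\beta)^{-\imath_j}\,e^{\beta^2\sum_iG(\overline{t}_i)}\,\mathbf{1}_{\{j\in\hat{\tau}\}}\right].
\]

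Finally I would change measure from the original renewal $P$ to $P_\beta$. Writing a renewal trajectory reaching $j$ as $(t_1,\dots,t_n)$ with $t_1+\dots+t_n=j$, the definition of $P_\beta$ through $\tilde{Q}_{\beta}(\overline{s},\overline{t})=Q_{\beta}(\overline{s},\overline{t})r_{\beta}(\overline{t})/(\lambda(\beta)r_{\beta}(\overline{s}))$ with $Q_{\beta}(\overline{s},\overline{t})=e^{\beta^2G(\overline{t})}K(t_q)\mathbf{1}_{\{\overline{s}\rightsquigarrow\overline{t}\}}$ shows, once the $r_\beta$-ratios telescope, that $\prod_{i=1}^nK(t_i)\,\lambda(\beta)^{-n}e^{\beta^2\sum_iG(\overline{t}_i)}$ equals $P_\beta(T_1=t_1,\dots,T_n=t_n)$ up to a factor bounded by a constant depending only on $\beta$: the surviving $r_\beta$-ratio is at most $\max_x r^*_{\beta}(x)/\min_x r^*_{\beta}(x)<\infty$, and the $O(q)$ boundary factors of $\lambda(\beta)$ and of $e^{\beta^2G}$ are bounded since $G$ is bounded on $E^q$ (the small-$n$ case is immediate, the weight being then just $\prod_{i=1}^nK(t_i)=P_\beta(T_1=t_1,\dots,T_n=t_n)$). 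Since $\imath_j=n$ on such trajectories, this turns the last display into $\mathbb{E}_{j,\lambda}(\tilde{Z}_j)\le c(\beta)\,E_{\beta}(e^{(\Delta-\overline{\rho}\beta\lambda)\imath_j}\mathbf{1}_{\{j\in\hat{\tau}\}})$, and dropping the indicator gives the claim.

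The main obstacle is the cross-covariance term $C_\tau$ produced by the tilt: it has no counterpart in the annealed computation, and it is precisely where the constant $\overline{\rho}=1+2\sum_{k=1}^q\rho_k$ appears — different from the $1+2\sum_n\rho_nP(n\in\tau)$ that governs $h_c^a$. The change-of-measure bookkeeping of the last step is somewhat delicate because of the boundary terms in the products over $G(\overline{t}_i)$ and $\lambda(\beta)$, but it merely repeats the Markov renewal machinery set up in \cite{Poisat_frc} and presents no real difficulty.
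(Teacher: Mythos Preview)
Your proof is correct and follows essentially the same route as the paper's: exchange the renewal and disorder expectations, compute the Gaussian integral as the exponential of half a variance, isolate the cross term (your $C_\tau$) to extract $\overline{\rho}$ up to boundary corrections, and then change measure from $P$ to $P_\beta$ using the telescoping $r_\beta$-ratios. The paper expands $\var(\sum_k\omega_k(\beta\delta_k-\lambda))$ directly rather than splitting it into $V_\tau$, $C_\tau$, and $v_j$, and drops the indicator $\mathbf{1}_{\{j\in\hat\tau\}}$ earlier, but these are purely presentational differences; your bookkeeping of the boundary terms and of the restriction $\lambda\le 1$ matches the paper's remarks exactly.
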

\begin{proof}
Computations give:
\begin{align*}
 \mathbb{E}_{j,\lambda}(\tilde{Z}_j) &= E\mathbb{E}_{j,\lambda}\left(e^{\sum_{k=0}^{j-1}(\beta\omega_k+h)\delta_k}\mathbf{1}_{\{j\in\hat{\tau}\}} \right)\\
 & \leq E\left(\mathbb{E}\left(e^{\sum_{k=0}^{j-1}(\beta\omega_k + h)\delta_k - \lambda\sum_{k=0}^{j-1}\omega_k} \right) e^{-\frac{\lambda^2}{2}v_j}\right)\\
 & = E\left( e^{h\sum\delta_k +\frac{1}{2}\var(\sum_{k=0}^{j-1}\omega_k(\beta\delta_k - \lambda))}\right)e^{-\frac{\lambda^2}{2}v_j},
\end{align*}
and
\begin{align*}
&\var(\sum_{k=0}^{j-1}\omega_k(\beta\delta_k - \lambda)) \\&= \sum_{k=0}^{j-1} (\beta\delta_k -\lambda)^2 + 2 \sum_{0\leq m<n\leq j-1}(\beta\delta_m -\lambda)(\beta\delta_n-\lambda)\rho_{n-m}\\&= \lambda^2j + 2\lambda^2\sum_{0\leq m<n\leq j-1}\rho_{n-m} +\beta^2 \sum_{k=0}^{j-1}\delta_k - 2\beta\lambda\sum_{k=0}^{j-1}\delta_k\\& + 2\beta^2 \sum_{0\leq m<n\leq j-1} \delta_m\delta_n\rho_{n-m} -2\beta\lambda\sum_{0\leq m<n\leq j-1}\delta_n \rho_{n-m}\\& -2\beta\lambda \sum_{0\leq m <n\leq j-1}\delta_m \rho_{n-m}
\end{align*}
and in the last equality, the sum of the first two terms equals $\lambda^2 v_j$. Hence, at $h=h_c^a(\beta) +\Delta$:
\begin{align}
 \mathbb{E}_{j,\lambda}(\tilde{Z}_j) &\leq C_1(\beta) E\left( e^{(\Delta - \overline{\rho}\beta\lambda -\log\lambda(\beta))\sum\delta_n +\beta^2 \sum\delta_n\delta_m \rho_{n-m}}  \right)\nonumber\\
 &\leq C_2(\beta) E_{\beta}\left(e^{(\Delta -\overline{\rho}\beta\lambda)\sum_{k=1}^j\delta_k} \right).\label{E_j_lambda}
\end{align}
where $C_1(\beta)$ and $C_2(\beta)$ are constants which are uniform on $\beta\leq\beta_0$, for all $\beta_0$ (remark that due to boundary effects, $\lambda$ should also appear in $C_1(\beta)$, but this is harmless since we will choose $|\lambda| = 1/\sqrt{j} \leq 1$).
\end{proof}

If $\Delta = a\beta^2$, and $a$ small enough, then for $j\leq k(\beta)=\frac{1}{a\beta^2}$,
\begin{equation*}
 \Delta - \frac{\overline{\rho}\beta}{\sqrt{j}} \leq -\frac{c_1}{2k(\beta)\sqrt{a}},
\end{equation*}
(the constant is uniform in $\beta$) hence
\begin{equation*}
 \max_{k(\beta)-R\leq j < k(\beta)} \mathbb{E}_{j,1/\sqrt{j}}(\tilde{Z}_j) \leq e^{c_1\sqrt{a}\beta^2\frac{R}{2}} E_{\beta}\left(\exp\left(-\frac{c_1}{2\sqrt{a}k(\beta)}\arrowvert \tau \cap \{1,\ldots,k(\beta)\} \arrowvert \right) \right).
\end{equation*}
We have used the inequality $\imath_j \geq \imath_{k(\beta)}-R$ for the range of $j$'s appearing in the maximum. We can make the last term as small as we want by taking $a$ small enough, which proves the second point of the theorem. For the first point, we need to prove that the procedure is uniform in $\beta\leq\beta_0$. Indeed, we shall prove:

\begin{lem}\label{lem_lim2}
 \begin{equation}\label{lim}
  \lim_{c\rightarrow\infty} \limsup_{\beta\rightarrow 0} E_{\beta}\left(e^{-\frac{c}{k(\beta)}|\tau\cap\{1,\ldots,k(\beta)\}|} \right) = 0.
 \end{equation}
\end{lem}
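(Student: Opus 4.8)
The plan is to reduce the limit \eqref{lim} to a statement about the behaviour of the renewal-function-type quantity $E_\beta(\imath_{k})$, where $k = k(\beta)$, using a second-moment or Jensen-type lower bound on $|\tau\cap\{1,\dots,k\}|$ under $P_\beta$, and then to control $E_\beta(\imath_k)$ uniformly as $\beta \to 0$ by comparing the Markov renewal process under $P_\beta$ with the genuine renewal process under $P=P_0$ (i.e., $\beta=0$). The point is that for $\alpha>1$ the mean interarrival time $E(T_1)$ is finite, so under $P$ one has $E(\imath_N) \sim N/E(T_1)$ as $N\to\infty$; consequently $E_\beta(\imath_{k(\beta)})$ is of order $k(\beta)$, and the exponent $-\frac{c}{k(\beta)}|\tau\cap\{1,\dots,k(\beta)\}|$ is typically of order $-c\cdot\mathrm{const}$, which tends to $0$ as $c\to\infty$ after taking the limsup in $\beta$.

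Concretely, I would proceed as follows. First, fix $c$ and write $M_k := |\tau\cap\{1,\dots,k\}| = \imath_k$. By Markov's inequality applied to $e^{-\frac{c}{k}M_k}$, or more simply by splitting on the event $\{M_k \geq \epsilon k\}$, one gets
\begin{equation*}
E_\beta\!\left(e^{-\frac{c}{k}M_k}\right) \leq e^{-c\epsilon} + P_\beta\!\left(M_k < \epsilon k\right),
\end{equation*}
so it suffices to show that $\limsup_{\beta\to 0} P_\beta(M_{k(\beta)} < \epsilon k(\beta)) \to 0$ as $\epsilon \to 0$, uniformly enough that the double limit vanishes. Second, to control $P_\beta(M_k < \epsilon k)$ I would use the renewal structure: under $P_\beta$, $\tau$ is a (delayed) Markov renewal process whose semi-Markov kernel is given explicitly in Section 3, and whose modulating chain lives on the finite space $E^q$ with transition kernel $\tilde Q^*_\beta$. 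Since $\alpha>1$, the interarrival tails $K(\cdot)$ have finite mean, and the Perron-Frobenius data $(\lambda(\beta), r^*_\beta, l^*_\beta)$ depend continuously on $\beta$ with $\lambda(0)=1$, $\tilde Q^*_0$ the $\beta=0$ kernel; hence the stationary mean inter-renewal spacing under $P_\beta$ converges to that under $P$ as $\beta\to 0$ and in particular stays bounded. A law of large numbers for Markov renewal processes (or a direct first-moment computation combined with a variance bound, both uniform for $\beta$ in a neighbourhood of $0$ by compactness/continuity of the finite-dimensional data) then gives $M_k/k \to 1/\mu$ in $P_\beta$-probability, where $\mu = \mu(\beta)$ is the mean spacing, uniformly for small $\beta$; choosing $\epsilon < 1/(2\sup_{\beta\leq\beta_0}\mu(\beta))$ makes $P_\beta(M_k < \epsilon k)$ small for all small $\beta$.

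The main obstacle I anticipate is making the convergence $M_k/k \to 1/\mu(\beta)$ genuinely \emph{uniform} in $\beta$ near $0$: one must check that the constants in the first- and second-moment estimates for $\imath_k$ under the Markov renewal law $P_\beta$ can be chosen independently of $\beta$ on $(0,\beta_0)$. This should follow from the fact that everything is governed by the finite matrices $Q^*_\beta$, $\tilde Q^*_\beta$ and their Perron-Frobenius eigenvectors, which vary continuously (indeed analytically) in $\beta$ on the compact set $[0,\beta_0]$, so that the relevant spectral gap, the stationary law $\pi_\beta$, and the tail behaviour of the semi-Markov kernel are all controlled uniformly; the tail of $K$ being fixed (independent of $\beta$) and summable-against-$n$ because $\alpha>1$ is what keeps $\mu(\beta)$ finite and bounded. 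A clean way to package this is to bound $E_\beta(\imath_k)$ from above by $C\,k$ and from below by $c\,k - C'$ with $c,C,C'$ uniform in $\beta\leq\beta_0$, via the Markov renewal analogue of the elementary renewal theorem together with Kac-type identities for the stationary chain on $E^q$; once these two-sided bounds are in hand, a Chebyshev bound on $\var_\beta(\imath_k) \leq C'' k$ (again uniform, since it reduces to summing a uniformly controlled Markov-renewal two-point function) closes the argument and \eqref{lim} follows.
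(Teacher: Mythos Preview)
Your approach is correct and genuinely different from the paper's. The paper does \emph{not} use moment bounds: instead it proves directly that $|\tau\cap\{1,\ldots,k(\beta)\}|/k(\beta)$ converges in law to the constant $1/m$ as $\beta\to 0$, by computing the Laplace transform of $\tau_{\beta,\lceil xk(\beta)\rceil}/k(\beta)$ via the matrix $\Phi_\beta(\lambda)_{x,y}=\varphi_{y_q}(\lambda)\tilde Q^*_\beta(x,y)$, expanding $\tilde Q^*_\beta=\tilde Q^*_0+A\beta^2(1+o(1))$ with $A\mathbf 1=\mathbf 0$, and carrying out a matrix perturbation argument to identify the limit $e^{-\lambda m x}$ and control the remainder. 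Your splitting $E_\beta(e^{-cM_k/k})\le e^{-c\epsilon}+P_\beta(M_k<\epsilon k)$ followed by uniform moment estimates sidesteps all of this, exploiting only that the finite matrices $\tilde Q^*_\beta$ vary continuously on the compact set $[0,\beta_0]$, so that the conditional interarrival means (hence $E_\beta(\tau_n)\le Mn$ uniformly) and the spectral data are uniformly controlled. In fact your variance/Chebyshev step is not even needed: from $P_\beta(M_k<\epsilon k)=P_\beta(\tau_{\lceil\epsilon k\rceil}>k)\le E_\beta(\tau_{\lceil\epsilon k\rceil})/k\le \epsilon M+M/k$ and $k(\beta)\to\infty$, one gets $\limsup_{\beta\to 0}P_\beta(M_{k(\beta)}<\epsilon k(\beta))\le \epsilon M$, which already closes the argument. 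What the paper's route buys is a slightly stronger conclusion (weak convergence to a constant), but for the lemma itself your argument is more elementary.
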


\begin{proof}
 This is a bit trickier than in the i.i.d case because also the law of $\tau$ depends on $\beta$. First, let us remark that there exists a coupling (of the modulating Markov chains with kernel $\tilde{Q}^*_{\beta}$) such that the expectation in (\ref{lim}) can be written
\begin{equation*}
 E\left(\exp\left(-\frac{c}{k(\beta)}\arrowvert \tau_{\beta} \cap \{1,\ldots, k(\beta)\} \arrowvert \right) \right).
\end{equation*}
Since $\tau_{\beta}$ converges to $\tau$ and $k(\beta)$ tends to $+\infty$ as $\beta$ goes to $0$, and 
\begin{equation*}
 \frac{\arrowvert \tau \cap \{1,\ldots,N\} \arrowvert}{N} \stackrel{\hbox{a.s}}{\rightarrow} \frac{1}{m} := \frac{1}{\sum_{n\geq1} nK(n)},
\end{equation*}
we expect that  the random variable $\frac{\arrowvert \tau_{\beta}\cap \{1,\ldots,k(\beta)\}\arrowvert}{k(\beta)}$
converges to $1/m$, but the result is not clear because there is a problem of uniformity in $\beta$. However, we can prove by hand that the convergence holds in law. We can show for example convergence of the cumulative distribution function. Since \begin{align*}P\left(\frac{|\tau_{\beta}\cap \{1,\ldots,k(\beta)\}|}{k(\beta)}\geq x\right) &= P(|\tau_{\beta}\cap\{1,\ldots,k(\beta)\}|\geq \lceil xk(\beta)\rceil)\\& = P(\tau_{\beta,\lceil xk(\beta)\rceil}\leq k(\beta))\\ & = P(\tau_{\beta,\lceil xk(\beta)\rceil}/k(\beta) \leq 1),\end{align*} it is enough to show that $\tau_{\beta,\lceil xk(\beta)\rceil}/k(\beta)$ converges in law to $mx$ as $\beta$ tends to $0$. We will prove this point by means of convergence of the Laplace transforms. From now on, we assume $xk(\beta)$ is an integer to avoid repeated use of $\lceil \cdot \rceil$. First we define $\Phi_{\beta}$ a matrix of Laplace transforms. For all $\beta\geq 0$, $\lambda\geq 0$, $x$ and $y$ in $E^q$, $\Phi_{\beta,x,y}(\lambda) := \varphi_{y_q}(\lambda) \tilde{Q}^*_{\beta}(x,y)$ where $\tilde{Q}^*_{\beta}$ is the  transition matrix defined in (\ref{tildeQstar}) and the $\varphi_t$'s are the following Laplace transforms:
\begin{equation*}
\varphi_t(\lambda) = \left\{ \begin{array}{ccc} e^{-\lambda t} & \hbox{ if } & 1\leq t \leq q\\ \sum_{t> q} e^{-\lambda t} \frac{K(t)}{K(\star)} & \hbox{ if }& t = \star. \end{array} \right.
\end{equation*} 
Then
\begin{equation*}
E(e^{-\lambda \frac{\tau_{\beta, xk(\beta)}}{k(\beta)}}) = \mu_0 \Phi_{\beta}^{xk(\beta)}(\frac{\lambda}{k(\beta)})\mathbf{1}
\end{equation*}
where $\mathbf{1}$ is the column vector with all coordinates equal to $1$ and $\mu_0$ is the initial law of the modulating Markov chain. Define also
\begin{equation*}
m(t) = \left\{ \begin{array}{ccc} t & \hbox{ if } & 1\leq t \leq q\\ \sum_{t>q} t\frac{K(t)}{K(\star)} \end{array} \right.
\end{equation*}
Then $\varphi_t(\lambda) = 1 -\lambda m(t)(1+o_{\lambda}(1))$ and $\tilde{Q}^*_{\beta} = \tilde{Q}^*_0 + A \beta^2 (1+o_{\beta}(1))$, with
\begin{equation}\label{A}A\mathbf{1}=\mathbf{0},\end{equation} so there exists a matrix $\epsilon_{\beta}(\lambda) = o_{\beta}(1)$ for all $\lambda\geq 0$ so that \begin{equation}\label{phi} \Phi_{\beta}(\lambda a\beta^2) =       \tilde{Q}^*_0 + \beta^2 (A -\lambda a M) + \beta^2 \epsilon_{\beta}(\lambda),\end{equation} where $M(x,y) = m(y_q)\tilde{Q}^*_0(x,y)$ and \begin{equation}\label{M} M\mathbf{1} = m\mathbf{1}.\end{equation} Notice that from (\ref{A}) and (\ref{M}) we have for all $k\geq 0$,
\begin{equation}\label{aux3}
(\tilde{Q}^*_0 + \beta^2(A - a\lambda M))^k \mathbf{1} = (1 - a\lambda m \beta^2)^k\mathbf{1}
\end{equation}
and if we choose $k = k(\beta) = \frac{1}{a\beta^2}$ and make $\beta$ tend to $0$, the right-hand side of (\ref{aux3}) converges to $e^{-\lambda mx}$, which is the limit we want to obtain. It remains to control the remainder term. Let
\begin{equation*}
R_{\beta,n}(\lambda) = \mu_0\left( \phi_{\beta}^n(\lambda a\beta^2) - (\tilde{Q}^*_0 + \beta^2(A - a\lambda M))^n \right)\mathbf{1}.
\end{equation*}
From (\ref{phi}), for all $\lambda\geq 0$ there exists $c>0$ such that
\begin{align*}
|R_{\beta,n}(\lambda)| &\leq \sum_{k=1}^n C^k_n (1+c\beta^2)^{n-k}(\beta^2 \max_{x,y\in E^q}|\epsilon_{\beta}(x,y)|)^k\\
& = (1 + c\beta^2 + \beta^2\Vert \epsilon_{\beta} \Vert)^n - (1+c\beta^2)^n
\end{align*}
and if we set $n = x/(a\beta^2)$, the two terms will tend to the same quantity as $\beta$ tends to $0$.
\end{proof}

We make a brief summary of the proof in the case $\alpha>1$. Uniformly in $\beta\leq\beta_0$ (for any $\beta_0$): set $h=h_c^a(\beta)+a\beta^2$ and choose $a$ small and $\gamma$ close to one so that $(1+\alpha)\gamma - 1> 1$ and Lemma \ref{hatK} holds. Again, if necessary, take $a$ even smaller so that $S_2$ is small and $R$ large enough to make $S_1$ small. All in all, $\varrho$ is smaller than $1$ so with Lemma \ref{lemma_rho} we can conclude that $F(\beta,h_c^a(\beta) + a\beta^2)=0$.

\subsection{Proof of Theorem \ref{relevance3}}
Let us fix $\beta>0$ and set
\begin{equation*}
 k = k(\Delta) := F^a(\beta,h_c^a(\beta)+\Delta)^{-1} 
 \end{equation*}
 so that from Proposition \ref{ann_expo},
 \begin{equation*}
k(\Delta) \stackrel{\Delta\searrow 0}{\sim} (L'_{\beta}(1/\Delta))^{-1}\Delta^{-1/\alpha}.
\end{equation*}
Using (\ref{E_j_lambda}) with $\lambda= 1/\sqrt{j}$ and
\begin{equation*}
 0\leq\Delta\sqrt{j} \leq \Delta\sqrt{k} \stackrel{\Delta\searrow 0}{\sim} (L'_{\beta}(1/\Delta))^{-1/2} \Delta^{1-\frac{1}{2\alpha}}\stackrel{(\alpha>1/2)}{\longrightarrow} 0,
\end{equation*}
there exists positive constants $c_1$ and $c_2$ (that may depend on $\beta$) such that for all $j\in\{1,\ldots,k\}$,
\begin{equation*}
 \mathbb{E}_{j,1/\sqrt{j}}(\tilde{Z}_j) \leq c_1 E_{\beta}\left(e^{-\frac{c_2}{\sqrt{j}}|\tau\cap\{1,\ldots,j\}|}\delta_j\right).
\end{equation*}
For all $x$ in $E^q$, let us denote by $\tau^{(x)}$ the renewal process defined by
\begin{equation*}
 \tau^{(x)} = \{\tau_n, n\geq q : \overline{T}^*_{n-q} = x\}.
\end{equation*}
where we remind that $\overline{T}^*_n = (T^*_n,\ldots, T^*_{n+q-1})$. In \cite[Relation (21)]{Poisat_frc} it was proved that when $(\tau_n)_{n\geq0}$ has law $P_{\beta}$, the interarrival kernel of $\tau^{(x)}$, that will be denoted by $(K_{\beta}^{(x)}(n))_{n\geq 1}$, satisfies
\begin{equation}\label{Kx}
 K_{\beta}^{(x)}(n) \stackrel{n\rightarrow +\infty}{\sim} c(\beta,x) K(n)
\end{equation}
for some positive constant $c(\beta,x)$. Then
\begin{align*}
 E_{\beta}\left(e^{-\frac{c_2}{\sqrt{j}}|\tau\cap\{1,\ldots,j\}|}\delta_j\right) &= \sum_{x\in E^q} E_{\beta}\left(e^{-\frac{c_2}{\sqrt{j}}|\tau\cap\{1,\ldots,j\}|}\delta_j^{(x)}\right)\\
 &\leq \sum_{x\in E^q} E_{\beta}\left(e^{-\frac{c_2}{\sqrt{j}}|\tau^{(x)}\cap\{1,\ldots,j\}|}\delta_j^{(x)}\right). 
\end{align*}
For every term in the last sum, we use (\ref{Kx}) and Proposition A.2 of \cite{Derrida_al_relevance} to get
\begin{equation*}
 E_{\beta}\left(e^{-\frac{c_2}{\sqrt{j}}|\tau^{(x)}\cap\{1,\ldots,j\}|}\delta_j^{(x)}\right) \leq C(\beta,x) \frac{L(j)}{j^{\alpha}}.
\end{equation*}
Therefore, from (\ref{Aj}) there exists a constant $c$ (depending on $\beta$, which is fixed here) so that:
\begin{equation}\label{Aj2}
 A_j \leq c(L(j))^{\gamma}j^{-\gamma\alpha},
\end{equation}
where $(L(\cdot))^{\gamma}$ is a slowly varying function. In the following we allow the value of $c$ to change from line to line. We split the sum in (\ref{sum}) in two parts. First, one gets using properties of slowly varying functions:
\begin{align*}
 \sum_{j=1}^{k/2} \frac{L(k-j)A_j}{(k-j)^{(1+\alpha)\gamma-1}} &\leq c\frac{L(k/2)}{k^{(1+\alpha)\gamma-1}} \sum_{j=1}^{k/2} \frac{(L(j))^{\gamma}}{j^{\alpha\gamma}} \\&\leq \frac{c(L(k/2))^{(1+\gamma)}}{k^{\gamma(2\alpha+1)-2}},
\end{align*}
which goes to $0$ as $k\rightarrow +\infty$ (i.e $\Delta\rightarrow 0$) by choosing $\gamma$ close enough to $1$, since
$$\gamma(2\alpha+1)-2 \stackrel{\gamma\rightarrow 1}{\longrightarrow} 2\alpha-1 >0.$$
Next, we have
\begin{align*}
 \sum_{j=k/2+1}^{k-1} \frac{L(k-j)A_j}{(k-j)^{(1+\alpha)\gamma-1}} &\leq c\frac{(L(k/2))^{\gamma}}{k^{\gamma\alpha}}\sum_{j=1}^{k/2} \frac{L(j)}{j^{(1+\alpha)\gamma-1}}\\
 &\leq \frac{c(L(k/2))^{(1+\gamma)}}{k^{\gamma(2\alpha+1)-2}},
\end{align*}
which goes to $0$ as $k\rightarrow +\infty$ for the same reason as above. We conclude from Lemma \ref{lemma_rho} that for all $\beta>0$, for $\Delta>0$ small enough, $F(\beta, h_c^a(\beta)+\Delta)=0$ and then $h_c(\beta)>h_c^a(\beta)$.

\begin{rmk}\label{rmk2}
 To obtain the bound (\ref{shift_h_c}) as in the i.i.d. case, one should set $\Delta = a\beta^{\frac{2\alpha}{2\alpha-1}+\epsilon}$ ($\epsilon>0$ arbitrarily small) and get bounds on $(A_j)_{1\leq j \leq k(\beta)}$ that are uniform in $\beta$ on $(0,\beta_0)$ (for some $\beta_0>0$), but this would require a finer analysis than the one we provide. When $\alpha>1$, this was made possible by Lemma \ref{lem_lim2}, the proof of which uses the fact that the terms in $\tilde{Q}_{\beta}^*$ and $\Delta (= a\beta^2)$ are of the same order in $\beta$, which is no longer true if we choose $\Delta = a\beta^{\frac{2\alpha}{2\alpha-1}+\epsilon}$, as it should be when $1/2 < \alpha < 1$.
 \end{rmk}



\bibliographystyle{model1b-num-names}
\bibliography{references}







\end{document}